\newtheorem{thm}{Theorem}[section]
\newtheorem{cor}[thm]{Corollary}
\newtheorem{lem}[thm]{Lemma}
\newtheorem{propo}[thm]{Proposition}
\theoremstyle{definition}
\theoremstyle{remark}
\newtheorem{rem}{Remark}
\title[The heat semigroup]{Discrete harmonic analysis \\associated with Jacobi expansions I: \\the heat semigroup}
\author[A. Arenas]{Alberto Arenas}
\address{Departamento de Matem\'aticas y Computaci\'on,
Universidad de La Rioja, Complejo Cient\'{\i}fico-Tecnol\'ogico,
Calle Madre de Dios 53, 26006 Logro\~no, Spain}
\email{alberto.arenas@unirioja.es}
\author[\'O. Ciaurri]{\'Oscar Ciaurri}
\address{Departamento de Matem\'aticas y Computaci\'on,
Universidad de La Rioja, Complejo Cient\'{\i}fico-Tecnol\'ogico,
Calle Madre de Dios 53, 26006 Logro\~no, Spain}
\email{oscar.ciaurri@unirioja.es}
\author[E. Labarga]{Edgar Labarga}
\address{Departamento de Matem\'aticas y Computaci\'on,
Universidad de La Rioja, Complejo Cient\'{\i}fico-Tecnol\'ogico,
Calle Madre de Dios 53, 26006 Logro\~no, Spain}
\email{edgar.labarga@unirioja.es}
\keywords{Discrete harmonic analysis, Jacobi polynomials, Heat equation, Jacobi matrices, Discrete Calder\'{o}n-Zygmund theory}
\subjclass[2010]{Primary: 42C10.  Secondary: 33C45}
\thanks{The first-named author was supported by a predoctoral research grant of the Government of Comunidad Aut\'{o}noma de La Rioja. The second-named author was supported by grant MTM2015-65888-C04-4-P MINECO/FEDER, UE, from Spanish Government. The third-named author was supported by a predoctoral research grant of the University of La Rioja.}
\begin{document}
%%%%%%%%%%%%%%%%%%%%%%%%%%%%%%%%%%%%%%%%%%%%%%%%%%%

%%%%%%%%%%%%%%%%%%%%%%%%%%%%%%%%%%%%%%%%%%%%%%%%%%%%%%
\begin{abstract}
In this paper we commence the study of discrete harmonic analysis associated with Jacobi orthogonal polynomials of order $(\alpha,\beta)$. Particularly, we give the solution $W^{(\alpha,\beta)}_t$, $t\ge 0$, and some properties of the heat equation related to the operator $J^{(\alpha,\beta)}-I$, where $J^{(\alpha,\beta)}$ is the three-term recurrence relation for the normalized Jacobi polynomials and $I$ is the identity operator. These results will be a consequence of a much more general theorem concerning the solution of the heat equation for Jacobi matrices. In addition, we also prove the positivity of the operator $W^{(\alpha,\beta)}_t$ under some suitable restrictions on the parameters $\alpha$ and $\beta$. Finally, we investigate mapping properties of the maximal operators defined by the heat and Poisson semigroups in weighted $\ell^{p}$-spaces using discrete vector-valued local Calder\'{o}n-Zygmund theory. For the Poisson semigroup, these properties follows readily from the control in terms of the heat one.
\end{abstract}
%%%%%%%%%%%%%%%%%%%%%%%%%%%%%%%%%%%%%%%%%%%%%%%%%%%%%%

\maketitle

%%%%%%%%%%%%%%%%%%%%%%%%%%%%%%%%%%%%%%%%%%%%%%%%%%%%%%
\section{Introduction}
%%%%%%%%%%%%%%%%%%%%%%%%%%%%%%%%%%%%%%%%%%%%%%%%%%%%%%

For $\alpha,\beta>-1$ and $n=0,1,2,\dots$, we consider the sequences $\{a_{n}^{(\alpha,\beta)}\}_{n\in\mathbb{N}}$ and $\{b_{n}^{(\alpha,\beta)}\}_{n\in\mathbb{N}}$ whose elements are given by
\[
a_n^{(\alpha,\beta)}=\frac{2}{2n+\alpha+\beta+2}
\sqrt{\frac{(n+1)(n+\alpha+1)(n+\beta+1)(n+\alpha+\beta+1)}{(2n+\alpha+\beta+1)(2n+\alpha+\beta+3)}},\quad n\geq 1,
\]
\[
a_{0}^{(\alpha,\beta)} = \frac{2}{\alpha+\beta+2}\sqrt{\frac{(\alpha+1)(\beta+1)}{(\alpha+\beta+3)}},
\]
\[
b_n^{(\alpha,\beta)}=\frac{\beta^2-\alpha^2}{(2n+\alpha+\beta)(2n+\alpha+\beta+2)},\quad n\geq 1,
\]
and
\[
b_{0}^{(\alpha,\beta)} = \frac{\beta-\alpha}{\alpha+\beta+2}.
\]
Then, for any given sequence $\{f(n)\}_{n\ge 0}$, we define $\{J^{(\alpha,\beta)}f(n)\}_{n\ge 0}$ by the relations
\[
J^{(\alpha,\beta)}f(n)=a_{n-1}^{(\alpha,\beta)}f(n-1)+b_n^{(\alpha,\beta)}f(n)+ a_{n}^{(\alpha,\beta)}f(n+1), \qquad n\ge 1,
\]
and $J^{(\alpha,\beta)}f(0)=b_0^{(\alpha,\beta)}f(0)+ a_{0}^{(\alpha,\beta)}f(1)$.

The sequences $\{a_n^{(\alpha,\beta)}\}_{n\ge 0}$ and $\{b_n^{(\alpha,\beta)}\}_{n\ge 0}$ are the ones involved in the three-term recurrence relation for the normalized Jacobi polynomials. By using the Rodrigues' formula (see \cite[p.~67, eq.~(4.3.1)]{Szego}), the Jacobi polynomials $P^{(\alpha,\beta)}_n(x)$, $n\ge 0$, are defined as
\[
(1-x)^{\alpha}(1+x)^{\beta}P_n^{(\alpha,\beta)}(x)=\frac{(-1)^n}{2^n \, n!}\frac{d^n}{dx^n}\left((1-x)^{\alpha+n}(1+x)^{\beta+n}\right).
\]
They are orthogonal on the interval $[-1,1]$ with respect to the measure
\[
d\mu_{\alpha,\beta}(x)=(1-x)^\alpha(1+x)^{\beta}\,dx.
\]
The family $\{p_n^{(\alpha,\beta)}(x)\}_{n\ge 0}$, given by $p_n^{(\alpha,\beta)}(x)=w_n^{(\alpha,\beta)}P_n^{(\alpha,\beta)}(x)$, where
\begin{equation*}
\begin{aligned}
w_n^{(\alpha,\beta)}& = \frac{1}{\|P_n^{(\alpha,\beta)}\|_{L^2([-1,1],d\mu_{\alpha,\beta})}} \\&= \sqrt{\frac{(2n+\alpha+\beta+1)\, n!\,\Gamma(n+\alpha+\beta+1)}{2^{\alpha+\beta+1}\Gamma(n+\alpha+1)\,\Gamma(n+\beta+1)}},\quad n\geq1,
\end{aligned}
\end{equation*}
and
\[
w_{0}^{(\alpha,\beta)} = \frac{1}{\|P_{0}^{(\alpha,\beta)}\|_{L^2([-1,1],d\mu_{\alpha,\beta})}} = \sqrt{\frac{\Gamma(\alpha+\beta+2)}{2^{\alpha+\beta+1}\Gamma(\alpha+1)\Gamma(\beta+1)}},
\]
is a complete orthonormal system in the space $L^2([-1,1],d\mu_{\alpha,\beta})$. Furthermore, we have that
\[
J^{(\alpha,\beta)}p^{(\alpha,\beta)}_n(x)=xp_n^{(\alpha,\beta)}(x),\qquad x\in\ [-1,1],
\]
which is obtained by a straightforward computation from the three-term recurrence relation for the orthogonal Jacobi polynomials, namely \cite[eq. 4.5.1, p.71]{Szego}
\begin{align*}
2n(n+&\alpha+\beta)(2n+\alpha+\beta-2)P_{n}^{(\alpha,\beta)}(x) \\
	&=(2n+\alpha+\beta-1)\left((2n+\alpha+\beta)(2n+\alpha+\beta-2)x+\alpha^2-\beta^2\right)P_{n-1}^{(\alpha,\beta)}(x) \\
	&\phantom{=}-2(n+\alpha-1)(n+\beta-1)(2n+\alpha+\beta)P_{n-2}^{(\alpha,\beta)}(x)
\end{align*}
for $n\geq2$, with
\[
P_{0}^{(\alpha,\beta)}(x)=1, \quad\text{and}\quad
P_{1}^{(\alpha,\beta)}(x)=\frac{\alpha+\beta+2}{2}x+\frac{\alpha-\beta}{2}.
\]

The main purpose of this paper is the study of discrete harmonic analysis associated with the operator $J^{(\alpha,\beta)}$. Therefore, this work can be viewed as a generalization of the results presented in \cite{Ciau-et-al} for the discrete Laplacian
\begin{equation}
\label{ec:discrete}
\Delta_d f(n)=f(n-1)-2f(n)+f(n+1),\quad n\in\mathbb{Z},
\end{equation}
and in \cite{Bet-et-al} for ultraspherical expansions, which corresponds with the case of $J^{(\alpha,\beta)}$ $\alpha=\beta=\lambda-1/2$, $\lambda>-1/2$. On the other hand, it can also be considered as the discrete version of the harmonic analysis for the continuous Jacobi expansions developed, for example, in~\cite{AdamSjogren}.

It is convenient for us to work with the operator
\[
\mathcal{J}^{(\alpha,\beta)}f(n)=(J^{(\alpha,\beta)}-I)f(n),
\]
rather than working with $J^{(\alpha,\beta)}$ (here $I$ denotes the identity operator), since the translated operator $-\mathcal{J}^{(\alpha,\beta)}$ is non-negative. In fact, the spectrum of $J^{(\alpha,\beta)}$ is the interval $[-1,1]$, so that the spectrum of $-\mathcal{J}^{(\alpha,\beta)}$ is $[0,2]$. Observe that one could also get a positive operator by defining $\mathcal{\tilde{J}}^{(\alpha,\beta)}f(n)=(J^{(\alpha,\beta)}+I)f(n),$ where in this case the spectrum would be  the interval $[0,2]$ and similar results of this paper would be attained.

Our main goal here is to give a precise and simple expression of the heat semigroup associated with $\mathcal{J}^{(\alpha,\beta)}$ and to establish some of its properties. In forthcoming papers, we will analyse other typical aspects and operators (Riesz transform, weighted inequalities for potential operators, Littlewood-Paley functions, etc.) of harmonic analysis related to Jacobi expansions in the discrete setting.

For $n\geq 0$, $\alpha$, $\beta>-1$, and $t\ge 0$, the heat equation in this context is given
\[
\frac{\partial u(n,t)}{\partial t}=\mathcal{J}^{(\alpha,\beta)}u(n,t)
\]
and for each sequence $\{f(n)\}_{n\ge 0}$ the corresponding initial-value problem is
\begin{equation}\label{initialvalueproblem}
\begin{cases}
\dfrac{\partial u(n,t)}{\partial t}=\mathcal{J}^{(\alpha,\beta)}u(n,t),\\[4pt]
u(n,0)=f(n).
\end{cases}
\end{equation}
We will show in Section~\ref{sec:semi} that a solution is $u(n,t)=W_t^{(\alpha,\beta)}f(n)$, with
\[
W_t^{(\alpha,\beta)}f(n)=\sum_{m=0}^\infty f(m)K^{(\alpha,\beta)}_t(m,n),
\]
where the kernel is
\[
K_t^{(\alpha,\beta)}(m,n)=\int_{-1}^1 e^{-(1-x)t}p_m^{(\alpha,\beta)}(x)p_n^{(\alpha,\beta)}(x)\, d\mu_{\alpha,\beta}(x).
\]
Additionally, as a consequence of a much more general result concerning the solution of the heat equation for Jacobi matrices, we will prove that the family $\{W_t^{(\alpha,\beta)}\}_{t\ge 0}$ is a strongly continuous semigroup of operators on $\ell^{2}(\mathbb{N})$ (see Theorem~\ref{thm:semigroup}).

It is worth pointing out that the kernel of the heat semigroup can be written in terms of special functions in the particular case $\alpha=\beta=-1/2$, which is related to Chebyshev polynomials of the first kind. Indeed, the change of variable $x=\cos \theta$, $\theta\in [0,\pi]$, lets us to write
\[
K_t^{(-1/2,-1/2)}(m,n)=\int_0^\pi e^{-t(1-\cos\theta)}\cos(m\theta)\cos(n\theta)\, d\theta.
\]
Then, the identity (see \cite[p. 456]{PBM1})
\[
\frac{1}{\pi}\int_0^\pi e^{z\cos \theta}\cos(m\theta)\, d\theta=I_m(z), \qquad |\arg(z)|<\pi,
\]
where $I_m$ denotes the Bessel function of imaginary argument and order $m$, leads us to
\[
K_t^{(-1/2,-1/2)}(m,n)=\frac{\pi}{2}e^{-t}(I_{n-m}(t)+I_{n+m}(t)).
\]
In \cite{Ciau-et-al}, the starting point to study harmonic analysis for the discrete Laplacian \eqref{ec:discrete} was the study of the heat operator associated to it. The kernel of this operator is, precisely, the special function $I_m$. So, one can consider the present work as a natural extension of the corresponding results in that paper in view of the discussion above. The asymptotic behaviour of the functions $I_m$ was a fundamental tool in \cite{Ciau-et-al}, however, in our case a deeper analysis of the kernel $K_t^{(\alpha,\beta)}$ will be needed in order to obtain the appropriate estimates.

We will continue our analysis of the heat semigroup $\{W_t^{(\alpha,\beta)}\}_{t \ge 0}$ showing that it is the unique solution of the initial-value problem \eqref{initialvalueproblem} in a suitable space. Moreover, after extending the definition of the heat semigroup for sequences in $\ell^\infty(\mathbb{N})$, we will look at its positivity under some restrictions on the parameters $\alpha$ and $\beta$. The main tool to prove our result about the positivity of the coefficients is the linearization formula for Jacobi polynomials due to Gasper \cite{Gasper2,Gasper1}. Both results and their proofs will be included in Section~\ref{sec:Jacobi-pol}.

Finally, we will devote Section~\ref{sec:max} to deal with some weighted inequalities for the maximal operators related to the heat and Poisson semigroups.

A weight on $\mathbb{N}$ will be a strictly positive sequence $w=\{w(n)\}_{n\ge 0}$. We consider the weighted $\ell^{p}$-spaces
\[
\ell^p(\mathbb{N},w)=\left\{f=\{f(n)\}_{n\ge 0}: \|f\|_{\ell^{p}(\mathbb{N},w)}:=\Bigg(\sum_{m=0}^{\infty}|f(m)|^p w(m)\Bigg)^{1/p}<\infty\right\},
\]
$1\le p<\infty$, and the weak weighted $\ell^{1}$-space
\[
\ell^{1,\infty}(\mathbb{N},w)=\left\{f=\{f(n)\}_{n\ge 0}: \|f\|_{\ell^{1,\infty}(\mathbb{N},w)}:=\sup_{t>0}t\sum_{\{m\in \mathbb{N}: |f(m)|>t\}} w(m)<\infty\right\},
\]
and we simply write $\ell^p(\mathbb{N})$ and $\ell^{1,\infty}(\mathbb{N})$ when $w(n)=1$ for all $n\in \mathbb{N}$.

Furthermore, we say that a weight $w(n)$ belongs to the discrete Muckenhoupt $A_p(\mathbb{N})$ class, $1<p<\infty$, provided that
\[
\sup_{\begin{smallmatrix} 0\le n \le m \\ n,m\in \mathbb{N} \end{smallmatrix}} \frac{1}{(m-n+1)^p}\Bigg(\sum_{k=n}^mw(k)\Bigg)\Bigg(\sum_{k=n}^mw(k)^{-1/(p-1)}\Bigg)^{p-1} <\infty,
\]
and that $w(n)$ belongs to the discrete Muckenhoupt $A_1(\mathbb{N})$ class if
\[
\sup_{\begin{smallmatrix} 0\le n \le m \\ n,m\in \mathbb{N} \end{smallmatrix}} \frac{1}{m-n+1}\Bigg(\sum_{k=n}^mw(k)\Bigg)\max_{n\le k \le m}w(k)^{-1} <\infty,
\]
holds.

For sequences $f\in\ell^{\infty}(\mathbb{N})$, we define  the Poisson operator by
\begin{equation}
\label{eq:poisson}
P^{(\alpha,\beta)}_tf(n)=\frac{1}{\sqrt{\pi}}\int_{0}^{\infty}\frac{e^{-u}}{\sqrt{u}}W_{t^2/(4u)}^{(\alpha,\beta)}f(n)\, du,\quad t\ge 0.
\end{equation}
The maximal heat and Poisson operators are given by
\begin{equation}
\label{eq:max-heat}
W^{(\alpha,\beta)}_{\ast}f(n)=\sup_{t>0}|W_t^{(\alpha,\beta)}f(n)|
\end{equation}
and
\begin{equation}
\label{eq:max-Poisson}
P^{(\alpha,\beta)}_{\ast}f(n)=\sup_{t>0}|P_t^{(\alpha,\beta)}f(n)|.
\end{equation}
We will prove that
\[
\|W^{(\alpha,\beta)}_{\ast}f\|_{\ell^p(\mathbb{N},w)}\le \|f\|_{\ell^p(\mathbb{N},w)}
\]
and
\[
\|P^{(\alpha,\beta)}_{\ast}f\|_{\ell^p(\mathbb{N},w)}\le \|f\|_{\ell^p(\mathbb{N},w)},
\]
for $f\in\ell^{p}(\mathbb{N},w)$, $w$ a weight in the discrete $A_p(\mathbb{N})$ class, $1<p<\infty$, and parameters $\alpha$, $\beta\geq-1/2$. Furthermore, we will also give weak type inequalities in the case $p=1$. These estimates will be obtained as a consequence of an appropriate discrete vector-valued local Calder\'on-Zygmund theory developed in \cite{Bet-et-al}. The result concerning these inequalities (Theorem~\ref{th:max}) and a revisited overview of the aforementioned Calder\'on-Zygmund theory will be given in Section~\ref{sec:max}. The corresponding technical results to show the main estimates will be included in Section~\ref{sec:tech}.

Throughout the paper $C$ will denote a positive constant independent of significant quantities and it possibly takes different values in each case.

%%%%%%%%%%%%%%%%%%%%%%%%%%%%%%%%%%%%%%%%%%%%%%%%%%%%%%
\section{The heat semigroup for Jacobi matrices}
\label{sec:semi}
%%%%%%%%%%%%%%%%%%%%%%%%%%%%%%%%%%%%%%%%%%%%%%%%%%%%%%
In this section we are interested in the heat semigroup related to Jacobi matrices, that is, infinite tridiagonal matrices
\[
J=\begin{pmatrix}
    b_0 & a_0 & 0 & 0 & \cdots \\
    a_0 & b_1 & a_1 & 0 & \cdots \\
    0 & a_1 & b_2 & a_2 & \cdots \\
    0 & 0 & a_2 & b_3 & \cdots \\
    \vdots & \vdots & \vdots & \vdots & \ddots
  \end{pmatrix},
\]
with $a_n>0$ and $b_n\in \mathbb{R}$ for all $n\in\mathbb{N}$. We suppose that $\{a_n\}_{n\ge 0}$ and $\{b_n\}_{n\ge 0}$ are bounded sequences so that $J$ defines a bounded self-adjoint linear operator on $\ell^2(\mathbb{N})$. In this situation, Favard's theorem (see \cite{Favard} and \cite[Ch.~1, Theorem~4.4]{Chihara}) states that each Jacobi matrix corresponds to a spectral measure $\mu$ with a compact support $X$ having an infinite number of points. Moreover, there exists a family of polynomials $\{p_n\}_{n\ge 0}$ orthonormalized in $L^2(X,\mu)$, i.e.,
\[
\int_X p_n(x)p_m(x)\, d\mu(x)=\delta_{nm},
\]
where $\delta_{nm}$ denotes the Kronecker's delta, satisfying the three-term recurrence relation
\[
xp_n(x)=a_{n-1}p_{n-1}(x)+b_{n}p_n(x)+a_{n}p_{n+1}(x),\quad x\in X,
\]
with $p_{-1}(x)=0$, and where the sequences $\{a_n\}_{n\ge 0}$ and $\{b_n\}_{n\ge 0}$ are the entries of the Jacobi matrix $J$ associated with the measure $\mu$.

As it is well-known, the measure $\mu$ related to a Jacobi matrix may not be unique (see \cite[\S~56]{Stieltjes}). However, if it is unique, then the family of orthonormal polynomials $\{p_n\}_{n\ge 0}$ is dense in $L^2(X,d\mu)$ (see \cite[Theorem~2.14]{ShohatTamarkin}). Hence, the Fourier series related to them is convergent in the space $L^2(X,\mu)$ and for each $f\in L^2(X,\mu)$, with the Fourier coefficients given by
\[
c_m(f) = \int_{X}f(t) p_m(t)\, d\mu(t),
\]
the identity
\[
f(x) = \sum_{m=0}^{\infty}c_m(f) p_m(x),
\]
holds in $L^2(X,\mu)$. In order to guarantee the uniqueness of the measure $\mu$, we suppose that $a_n\to a$ and $b_n\to b$ (with both $a$ and $b$ finite), so $X$ is bounded with at most countably many points outside the interval $[b-2a,b+2a]$, with $b\pm 2a$ the limit points of $X$ (see \cite[Ch.~2, Theorem~5.6]{Chihara}).

The main target of this section is to find the solution of the heat equation associated with a perturbation of the Jacobi matrix $J$ which is given in the following way: for each Jacobi matrix, let $s$ be the maximum of the support of the measure $\mu$ and $s^{+}=\max \{s,0\}$, and define the operator
\[
\mathcal{J}=J-s^{+}I,
\]
where $I$ is the infinite identity matrix. Observe that now
\begin{equation}
\label{ec:Jpn}
\mathcal{J}p_n(x)=(x-s^{+})p_n(x),\quad x\in X.
\end{equation}

Then, for $n\geq0$ and $t\ge 0$, and each appropriate sequence $\{f(n)\}_{n\ge 0}$, we consider the initial-value problem corresponding to the heat equation equation associated with the operator $\mathcal{J}$ given by
\[
\begin{cases}
\dfrac{\partial u(n,t)}{\partial t}=\mathcal{J}u(n,t),\\[4pt]
u(n,0)=f(n).
\end{cases}
\]
Let us begin checking that
\[
u(n,t)=W_tf(n),
\]
where
\begin{equation}
\label{ec:Wt}
W_tf(n)=\sum_{m=0}^{\infty} f(m)K_t(m,n)
\end{equation}
and
\[
K_t(m,n)=\int_X e^{(x-s^{+})t}p_m(x)p_n(x)\, d\mu(x),
\]
is a solution of the initial-value problem. First, note that $W_tf$ is well defined for each sequence $f\in \ell^2(\mathbb{N})$. Indeed,
\[
|W_tf(n)|\le \|f\|_{\ell^2(\mathbb{N})}\|K_t(\cdot, n)\|_{\ell^2(\mathbb{N})}
\]
and, by Parseval's identity,
\[
\|K_t(\cdot, n)\|_{\ell^2(\mathbb{N})}=\|e^{(\cdot-s^{+})t}p_n\|_{L^2(X,\mu)}\le\|p_n\|_{L^2(X,\mu)}=1.
\]
Following a similar argument we have that $\frac{\partial}{\partial t}W_tf(n)$ is well defined and
\[
\frac{\partial}{\partial t}W_tf(n)=\sum_{m=0}^{\infty} f(m)\int_X e^{(x-s^{+})t}(x-s^{+})p_m(x)p_n(x)\, d\mu(x).
\]
Then, using \eqref{ec:Jpn}, we get $\frac{\partial}{\partial t}W_tf(n)=\mathcal{J}W_tf(n)$. In addition, by means of the identity
\begin{equation}
\label{ec:delta}
K_0(m,n)=\delta_{mn},
\end{equation}
where $\delta_{mn}$ denotes the Kronecker's delta, it is immediate to see that $W_0f(n)=f(n)$.

The following theorem shows that the family of operators $\{W_t\}_{t \ge 0}$ is a strongly continuous semigroup of operators on $\ell^2(\mathbb{N})$, i. e. a $C_0$-semigroup, and that $W_t$ satisfies a contraction property.
\begin{thm}
\label{thm:semigroup}
Let $W_t$ the operator defined by \eqref{ec:Wt}, then for each sequence $f\in \ell^2(\mathbb{N})$ we have that
\begin{equation}
\label{ec:l2-bound}
\|W_tf\|_{\ell^2(\mathbb{N})}\le  \|f\|_{\ell^2(\mathbb{N})}.
\end{equation}
Moreover,
\begin{enumerate}
  \item[a)] $W_{t_{1}}W_{t_{2}} f(n)=W_{t_{1}+t_{2}}f(n)$, for $t_{1},t_{2}\ge 0$, $n\in\mathbb{N}$,
  \item[b)] $W_0f(n)=f(n)$, $n\in\mathbb{N}$ and,
  \item[c)] $\lim_{t\to 0^+}\|W_tf-f\|_{\ell^2(\mathbb{N})}=0.$
\end{enumerate}
\end{thm}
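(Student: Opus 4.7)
The plan is to reduce everything to multiplication by the bounded function $e^{(x-s^{+})t}$ under the spectral Fourier-type isomorphism $\mathcal{F}\colon \ell^2(\mathbb{N})\to L^2(X,\mu)$ sending $f\mapsto F$, where $F(x)=\sum_{m\ge0} f(m)p_m(x)$. Since $\{p_n\}_{n\ge0}$ is a complete orthonormal system in $L^2(X,\mu)$, $\mathcal{F}$ is an isometric isomorphism, and Parseval gives $\|f\|_{\ell^2(\mathbb{N})}=\|F\|_{L^2(X,\mu)}$ and $f(n)=\int_X F(x)p_n(x)\,d\mu(x)$.

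The first step is to prove the spectral identity
\[
W_t f(n)=\int_X e^{(x-s^{+})t}F(x)p_n(x)\,d\mu(x),\qquad n\ge0,\ t\ge0,
\]
i.e., $\mathcal{F}(W_t f)(x)=e^{(x-s^{+})t}F(x)$. Starting from \eqref{ec:Wt}, this amounts to interchanging the sum in $m$ with the integral defining $K_t(m,n)$; the swap is justified by the Cauchy–Schwarz bound $\sum_m|f(m)||K_t(m,n)|\le \|f\|_{\ell^2(\mathbb{N})}\|K_t(\cdot,n)\|_{\ell^2(\mathbb{N})}<\infty$, using the estimate $\|K_t(\cdot,n)\|_{\ell^2(\mathbb{N})}\le1$ already established above the theorem. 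Equivalently, one may apply $\mathcal{F}^{-1}$ to the partial sums $F_N(x)=\sum_{m=0}^{N}f(m)p_m(x)$ and pass to the $L^2$ limit.

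The second step delivers \eqref{ec:l2-bound}. Since $s^{+}=\max\{s,0\}\ge s\ge x$ on $X$, we have $0<e^{(x-s^{+})t}\le1$ for every $t\ge0$ and $x\in X$, so multiplication by $e^{(x-s^{+})t}$ is a contraction on $L^2(X,\mu)$. Applying $\mathcal{F}^{-1}$ (an isometry) to both sides of the spectral identity yields
\[
\|W_t f\|_{\ell^2(\mathbb{N})}=\|e^{(\cdot-s^{+})t}F\|_{L^2(X,\mu)}\le\|F\|_{L^2(X,\mu)}=\|f\|_{\ell^2(\mathbb{N})}.
\]

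The remaining properties follow painlessly from the spectral identity. For a), two successive applications give $\mathcal{F}(W_{t_1}W_{t_2}f)(x)=e^{(x-s^{+})t_1}e^{(x-s^{+})t_2}F(x)=e^{(x-s^{+})(t_1+t_2)}F(x)=\mathcal{F}(W_{t_1+t_2}f)(x)$, and injectivity of $\mathcal{F}$ finishes the argument. Property b) is immediate from \eqref{ec:delta}. Finally, for c), the spectral identity yields
\[
\|W_t f-f\|_{\ell^2(\mathbb{N})}^2=\int_X\bigl|e^{(x-s^{+})t}-1\bigr|^2|F(x)|^2\,d\mu(x),
\]
and the integrand tends to $0$ pointwise and is dominated by $|F(x)|^2\in L^1(X,\mu)$ (since $|e^{(x-s^{+})t}-1|\le1$), so dominated convergence concludes the proof. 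The only non-routine moment is the justification of the sum–integral interchange in the first step, and that is handled by the uniform $\ell^2$-bound on $K_t(\cdot,n)$ already recorded before the theorem.
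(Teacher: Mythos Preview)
Your proof is correct and follows essentially the same route as the paper: both establish the spectral identity $W_tf(n)=c_n(e^{(\cdot-s^{+})t}F)$ via the Fourier isomorphism between $\ell^2(\mathbb{N})$ and $L^2(X,\mu)$, deduce contractivity from $|e^{(x-s^{+})t}|\le1$ on $X$, and handle a)--c) by Parseval and dominated convergence. Your treatment is slightly more explicit about the sum--integral interchange in the first step and about dominated convergence in c), but the underlying argument is the same.
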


Before giving the proof of the theorem we make some important remarks.

It is well-known that for each sequence in $f\in\ell^2(\mathbb{N})$ there is a function $F\in L^2(X,d\mu)$ such that
\begin{equation}
\label{ec:inverse}
F(x)=\sum_{m=0}^\infty f(m)p_m(x),
\end{equation}
where the convergence holds in the $L^2(X,d\mu)$ sense, and $c_m(F)=f(m)$. Furthermore, we have Parseval's identity
\[
\|F\|_{L^2(X,d\mu)}=\|f\|_{\ell^2(\mathbb{N})}.
\]
Obviously, given two sequences $f_1,f_2\in \ell^2(\mathbb{N})$, the identity
\begin{equation}\label{ec:Parseval}
\int_X F_1(x)F_2(x)\, d\mu(x)=\sum_{m=0}^\infty f_1(m)f_2(m),
\end{equation}
where the functions $F_1$ and $F_2$ are defined as in \eqref{ec:inverse}, holds.

\begin{proof}[Proof of Theorem \ref{thm:semigroup}]
Let us first show that \eqref{ec:l2-bound} holds. Note that
\[
W_tf(n)=\int_X e^{(x-s^{+})t}F(x)p_n(x)\, d\mu(x)=c_n(e^{(\cdot-s^{+})t}F)
\]
where $F$ is defined by \eqref{ec:inverse}. So, taking $\ell^2$-norm and considering Parseval's identity, \eqref{ec:l2-bound} is proved. Indeed,
\begin{align*}
\|W_tf\|_{\ell^2(\mathbb{N})}&=\|c_n(e^{(\cdot-s^{+})t}F)\|_{\ell^2(\mathbb{N})}=\|e^{(\cdot-s^{+})t}F\|_{L^2(X,d\mu)}
\\&\le \|F\|_{L^2(X,d\mu)}=\|f\|_{\ell^2(\mathbb{N})}.
\end{align*}

To prove a), we note that
\[
K_{t_{1}}(k,n)=c_k(e^{t_{1}(\cdot-s^{+})}p_n),
\]
and then we use \eqref{ec:Parseval} to get
\begin{equation*}
\begin{aligned}
W_{t_{1}}W_{t_{2}} f(n) &= \sum_{k=0}^{\infty} c_{k}(e^{t_{2}(\cdot-s^{+})}F)c_{k}(e^{t_{1}(\cdot-s^{+})}p_{n})\\ &= \int_{X} e^{(t_{1}+t_{2})(x-s^{+})} F(x) p_{n}(x)d\mu(x) = W_{t_{1}+t_{2}} f(n).
\end{aligned}
\end{equation*}

Part b) is a simple consequence of \eqref{ec:delta}. Finally, to prove part c), we observe that
\[
\|W_tf-f\|_{\ell^2(\mathbb{N})}^2=\|c_n((e^{(\cdot-s^{+})t}-1)F)\|_{\ell^2(\mathbb{N})}^2=\int_{X}(e^{(x-s^{+})t}-1)^2F^2(x)\, d\mu(x)
\]
and the result is immediate.
\end{proof}

A brief comment is in order at this point. It is clear that $W_tf=e^{t\mathcal{J}}f$ by construction, where
\[
e^{t\mathcal{J}}f=\int_{X} e^{-t(s^+-\lambda)}\, dE_{J}(\lambda)f
\]
and $E_J$ is the spectral measure associated with $J$, so $W_t$ is clearly a $C_0$-semigroup. Therefore the properties in Theorem \ref{thm:semigroup} could be obtained as an immediate consequence of this fact. However, our proof exhibits in an explicit way the relation between the heat semigroup $W_t$ and the Fourier expansions in terms of the polynomials $p_n$. This relation will play a fundamental role throughout this paper.

\begin{rem}
Defining the maximal operator by
\[
W_\ast f(n)=\sup_{t>0}|W_tf(n)|, \qquad f\in \ell^2(\mathbb{N}),
\]
using ideas from \cite[Ch. III]{Stein1} and the estimate \eqref{ec:l2-bound}, we can conclude the $\ell^{2}$-boundedness of $W_{\ast}$, that is,
\begin{equation}
\label{ec:L2-max}
\|W_\ast f\|_{\ell^2(\mathbb{N})}\le C \|f\|_{\ell^2(\mathbb{N})}.
\end{equation}
The sketch of the proof is as follows. Following \cite[pp. 74-75]{Stein1}
$$
W_{\ast}f(n)\leq Mf(n)+g(f)(n),
$$
where $g(f)(n)$ denotes the Littlewood-Paley function and $Mf(n)$ is the supremum of the averages of the heat semigroup. They are respectively given by
$$
g(f)(n)=\left(\int_{0}^{\infty}\left|t\frac{\partial}{\partial t}W_{t}f(n)\right|^2\,\frac{dt}{t}\right)^{1/2}
\quad\text{and}\quad
Mf(n)=\sup_{s>0}\left|\frac{1}{s}\int_{0}^{s}W_{t}f(n)\,dt\right|.
$$
Both operators are bounded from $\ell^{2}(\mathbb{N})$ to itself, that is,
$$
\|g(f)\|_{\ell^{2}(\mathbb{N})}\leq C\|f\|_{\ell^{2}(\mathbb{N})}
\quad\text{and}\quad
\|Mf\|_{\ell^{2}(\mathbb{N})}\leq C\|f\|_{\ell^{2}(\mathbb{N})},
$$
so \eqref{ec:L2-max} follows directly. The proof for the bound for $g(f)$ is akin to the spectral argument given in \cite[p. 74]{Stein1}. On its behalf, the bound for $Mf$ is the discrete analogous of the continuous one presented in \cite[Corollary 2]{Stein1961} and it can be proved in a similar way. The contractivity of the semigroup $\|W_{t}\|_{2}\leq 1$, obtained by means of \eqref{ec:l2-bound}, is a requirement there.

Note that an immediate corollary of this fact is the a.e.-convergence of the heat operator, that is,
\[
\lim_{t\to 0^+}W_tf(n)=f(n),\qquad \text{ a.e.,}
\]
for each sequence $f\in \ell^{2}(\mathbb{N})$. As it is well known, in the $\ell^p(\mathbb{N})$ spaces the a.e.-convergence is equivalent to the pointwise convergent. Then,
\begin{equation}
\label{eq:point-conv}
\lim_{t\to 0^+}W_tf(n)=f(n), \qquad n\in \mathbb{N},
\end{equation}
for each $f\in \ell^2(\mathbb{N})$. It is easy to check that \eqref{eq:point-conv} also follows from c) in Theorem~\ref{thm:semigroup}.
\end{rem}

\begin{rem}
From a) in Theorem \ref{thm:semigroup}, it is immediate to get a Chapman-Kolgomorov type identity in our setting. In fact, it is verified that
\[
\sum_{m=0}^{\infty} K_{t_1}(m,n)K_{t_2}(m,j)=K_{t_1+t_2}(n,j),\qquad n,j\ge 0.
\]
\end{rem}

%%%%%%%%%%%%%%%%%%%%%%%%%%%%%%%%%%%%%%%%%%%%%%%%%%%%%%%%%%%%%%%
\section{Two additional properties for the heat semigroup $W_{t}^{(\alpha,\beta)}$}
\label{sec:Jacobi-pol}
%%%%%%%%%%%%%%%%%%%%%%%%%%%%%%%%%%%%%%%%%%%%%%%%%%%%%%%%%%%%%%%

In the previous section we have analyzed the heat semigroup related to a general Jacobi matrix. Unfortunately, to study other deeper properties of this operator we need extra information of the associated family of orthogonal polynomials. In the particular case of the Jacobi polynomials we have a specific knowledge of their behaviour and we can prove some additional results. In fact, in this section, we prove a uniqueness result for the solution of the initial-value problem \eqref{initialvalueproblem} and we establish the positivity of $W_t^{(\alpha,\beta)}$.

\subsection{A uniqueness result} To prove our uniqueness result we employ a well-known energy method which uses a specific decomposition of our operator. Such decomposition stems from the observation that the sequences $\{d_n\}_{n\geq0}$ and $\{e_n\}_{n\geq0}$ defined by $d_{0} = \sqrt{\frac{2(\alpha+1)}{\alpha+\beta+2}}$,
\begin{equation*}
d_{n} = \sqrt{\frac{2(n+\alpha+\beta+1)(n+\alpha+1)}{(2n+\alpha+\beta+1)(2n+\alpha+\beta+2)}},\quad n\geq 1,
\end{equation*}
and
\begin{equation*}
e_{n} = \sqrt{\frac{2(n+\beta+1)(n+1)}{(2n+\alpha+\beta+2)(2n+\alpha+\beta+3)}}\quad n\geq 0,
\end{equation*}
fulfil the relations
\begin{align*}
&a^{(\alpha,\beta)}_{n}=d_{n}e_{n}, & &\hspace{-5em}n\geq0 \\
&b^{(\alpha,\beta)}_{n}=1-d_{n}^{2}-e_{n-1}^{2}, & &\hspace{-5em} n\geq1
\end{align*}
and
$b^{(\alpha,\beta)}_{0}=1-d_{0}^2$. Then, defining the operators
\begin{equation*}
\delta f(n) = d_{n}f(n) - e_{n}f(n+1),\quad n\geq 0,
\end{equation*}
\begin{equation*}
\delta^{\star} f(n) = d_n f(n) - e_{n-1}f(n-1),\quad n\geq 1,
\end{equation*}
and $\delta^{\star} f(0) = d_{0}f(0)$, it is clear that $\mathcal{J}^{(\alpha,\beta)}$ is decomposed as
\begin{equation*}
\mathcal{J}^{(\alpha,\beta)} = -\delta^{\star}\delta.
\end{equation*}
Note that $\delta$ and $\delta^{\star}$ are adjoint operators in $\ell^{2}(\mathbb{N})$.

To formulate our uniqueness result we introduce the following space of sequences
\[
\ell_{\star}^{2} = \left\{u(n,t) : \sup_{t>0}|u(n,t)|\in \ell^2(\mathbb{N})\right\}.
\]

\begin{propo}
\label{propo:uniqueness}
For each $f\in \ell^2(\mathbb{N})$, the unique solution in $\ell_{\star}^{2}$ of the initial-value problem \eqref{initialvalueproblem} is $W_t^{(\alpha,\beta)}f$.
\end{propo}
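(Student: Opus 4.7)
The plan is the standard energy method, which is made available precisely by the decomposition $\mathcal{J}^{(\alpha,\beta)} = -\delta^\star \delta$ emphasized by the authors. By linearity of the equation it suffices to show that if $u\in\ell_\star^2$ solves \eqref{initialvalueproblem} with $f\equiv 0$, then $u(n,t)=0$ for all $n$ and all $t\ge 0$. Accordingly, I would introduce the energy
\[
E(t) = \sum_{n=0}^\infty u(n,t)^2,
\]
which is finite for every $t\ge 0$ because the membership $u\in\ell_\star^2$ provides a sequence $g\in\ell^2(\mathbb{N})$ with $|u(n,t)|\le g(n)$; thus $E(t)\le \|g\|_{\ell^2(\mathbb{N})}^2$.

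The core computation is to differentiate $E$ and express $E'(t)$ as a non-positive quadratic form. I would work with the partial sums $E_N(t)=\sum_{n=0}^N u(n,t)^2$, for which
\[
E_N'(t) = 2\sum_{n=0}^N u(n,t)\,\mathcal{J}^{(\alpha,\beta)}u(n,t),
\]
and show that $E_N'(t)$ converges uniformly on every compact subset of $[0,\infty)$. The uniform bound comes from the Cauchy--Schwarz estimate
\[
\Bigl|\sum_{n>N} u(n,t)\,\mathcal{J}^{(\alpha,\beta)}u(n,t)\Bigr| \le \Bigl(\sum_{n>N} g(n)^2\Bigr)^{1/2}\|\mathcal{J}^{(\alpha,\beta)}u(\cdot,t)\|_{\ell^2(\mathbb{N})},
\]
together with the boundedness of $\mathcal{J}^{(\alpha,\beta)}$ on $\ell^2(\mathbb{N})$ and $\|u(\cdot,t)\|_{\ell^2(\mathbb{N})}\le \|g\|_{\ell^2(\mathbb{N})}$. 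Since $E_N(t)\to E(t)$ pointwise, this yields
\[
E'(t) = 2\sum_{n=0}^\infty u(n,t)\,\mathcal{J}^{(\alpha,\beta)}u(n,t) = -2\sum_{n=0}^\infty u(n,t)\,\delta^\star\delta u(n,t).
\]
Now, since $u(\cdot,t)\in\ell^2(\mathbb{N})$ and both $\delta,\delta^\star$ are bounded on $\ell^2(\mathbb{N})$, the adjointness of $\delta$ and $\delta^\star$ in $\ell^2(\mathbb{N})$ (noted just above the statement) gives
\[
E'(t) = -2\sum_{n=0}^\infty |\delta u(n,t)|^2 \le 0.
\]

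Combining this with $E(0)=\|f\|_{\ell^2(\mathbb{N})}^2=0$ and $E(t)\ge 0$, we conclude $E(t)=0$ for every $t\ge 0$, hence $u(n,t)=0$ for all $n,t$, as desired. The main obstacle, and the only delicate point beyond bookkeeping, is the justification of the interchange of derivative and infinite sum; this is handled as above by the uniform tail bound that the $\ell_\star^2$ hypothesis delivers, and it is precisely the reason for introducing that space rather than, say, requiring only $u(\cdot,t)\in\ell^2(\mathbb{N})$ for each $t$. The summation-by-parts identity against $\delta^\star\delta$ itself is painless because everything in sight is square-summable and the operators are bounded, so no boundary term at infinity appears.
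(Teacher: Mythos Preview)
Your argument is correct and follows essentially the same route as the paper: reduce by linearity to zero initial data, introduce the energy $E(t)=\sum_n u(n,t)^2$, differentiate, use $\mathcal{J}^{(\alpha,\beta)}=-\delta^\star\delta$ together with the adjointness of $\delta$ and $\delta^\star$ to see $E'(t)\le 0$, and conclude from $E(0)=0$. The only cosmetic difference is in the justification of the interchange of derivative and sum: the paper bounds $|\partial_t(u(n,t)^2)|$ directly by a summable majorant via the tridiagonal structure of $\mathcal{J}^{(\alpha,\beta)}$, whereas you argue through uniform convergence of $E_N'(t)$ using Cauchy--Schwarz and the $\ell^2$-boundedness of $\mathcal{J}^{(\alpha,\beta)}$; both are valid and rely on the $\ell_\star^2$ hypothesis in the same way.
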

\begin{proof}
From the estimate \eqref{ec:L2-max}, it is clear that $W_t^{(\alpha,\beta)}$ is a solution of the initial-value problem \eqref{initialvalueproblem} in the space $\ell^2_{\star}$. It remains to prove the uniqueness in this space.

Suppose that \eqref{initialvalueproblem} has two solutions in $\ell^2_\star$, say for example, $u_{1}(n,t)$ and $u_{2}(n,t)$. Note that $u(n,t)=u_{1}(n,t)-u_{2}(n,t)$ solves
\begin{equation}
\begin{cases}
\dfrac{\partial u(n,t)}{\partial t}=\mathcal{J}^{(\alpha,\beta)}u(n,t),\\[4pt]
u(n,0)=0.
\end{cases}
\end{equation}
Then, set
\begin{equation*}
\mathcal{E}(t) = \frac{1}{2} \sum_{n=0}^{\infty} (u(n,t))^{2},\quad t \ge 0.
\end{equation*}
Clearly, $\mathcal{E}(t)\geq 0$ for all $t\ge 0$. Furthermore, using \eqref{initialvalueproblem} and the definition of $\mathcal{J}^{(\alpha,\beta)}$, we have that
\[
\left|\frac{\partial}{\partial t}((u(n,t))^2)\right|\le C(|u(n,t)u(n-1,t)|+(u(n,t))^2+|u(n,t)u(n+1,t)|)
\]
and then, using that $u\in \ell^2_\star$, we can interchange the derivative and the sum. In this way,
\begin{equation*}
\frac{d}{dt}\mathcal{E}(t) = \sum_{n=0}^{\infty} u(n,t)\mathcal{J}^{(\alpha,\beta)}u(n,t)=-\sum_{n=0}^{\infty} u(n,t) \delta^{\star}{\delta} u(n,t).
\end{equation*}
Since $\delta$ and $\delta^{\star}$ are adjoint operators in $\ell^{2}(\mathbb{N})$ we deduce that
\[
\frac{d}{dt}\mathcal{E}(t)=-\sum_{n=0}^{\infty}(\delta u(n,t))^2\leq 0.
\]
Hence, since $\mathcal{E}(0)=0$, we deduce that $\mathcal{E}(t)\leq 0$ and then $u_1(n,t)=u_2(n,t)$.
\end{proof}

We note that our election here of the space $\ell_{\star}^2$ is justified in order to guarantee the derivation under the summation, but any other space which allows us to do that step can be used instead of that space.

%\textcolor{blue}{We note that our election of the space $\ell_{\star}^2$ here is justified in order to guarantee the derivation under the summation, but it could be changed by any other space allowing us to do that step.}

%\textcolor{red}{
%At this point a remark seems to be necessary: as we have already said during the proof, working on the space $\ell_{\star}^{2}$ is justified in %order to guarantee the derivation under the summation, but it is interesting to notice the fact that this manipulation also holds if the condition %to belong to the $\ell_{\star}^{2}$ space is replaced by
%$$
%\sup_{t\in K}|u(n,t)|\in \ell^2(\mathbb{N}),
%$$
%for every compact subset $K\subset(0,\infty)$ for the solutions.
%}
%%%%%%%%%%%%%%%%%%%%%%%%%%%%%%%%%%%%%%%%%%%%%%%%%%%%%%%%%%%%%%
\subsection{On the positivity of $W_t^{(\alpha,\beta)}$}
%%%%%%%%%%%%%%%%%%%%%%%%%%%%%%%%%%%%%%%%%%%%%%%%%%%%%%%%%%%%%%%
Now, we investigate the positivity of the heat semigroup $\{W_t^{(\alpha,\beta)}\}_{t\ge 0}$. In fact, we are going to prove that $W_t^{(\alpha,\beta)}f$ is non-negative provided $f$ is a non-negative sequence in $\ell^\infty(\mathbb{N})$. Note that, from the results in the previous section, we have that $W_t^{(\alpha,\beta)}$ is well defined for sequences in $\ell^{2}(\mathbb{N})$. So, we first have to extend the definition to the space $\ell^\infty(\mathbb{N})$; this extension is well defined due to the following lemma.
\begin{lem}
\label{lem:acot-pos}
Let $\alpha,\beta\ge -1/2$ and $n\not=m$, then
\begin{equation}
\label{eq:l-infty}
|K_t^{(\alpha,\beta)}(m,n)|\le C \frac{t^{1/2}}{|m-n|^2}.
\end{equation}
\end{lem}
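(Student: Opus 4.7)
The plan is to rewrite the kernel as an oscillatory integral on $[0,\pi]$ via $x=\cos\theta$ and extract the decay in $|m-n|$ using the Szeg\H{o}--Darboux asymptotic expansion for Jacobi polynomials combined with integration by parts in $\theta$. After the change of variables one has
\[
K_t^{(\alpha,\beta)}(m,n)=2^{\alpha+\beta+1}\int_0^\pi e^{-(1-\cos\theta)t}\,p_m^{(\alpha,\beta)}(\cos\theta)\,p_n^{(\alpha,\beta)}(\cos\theta)\,(\sin(\theta/2))^{2\alpha+1}(\cos(\theta/2))^{2\beta+1}\,d\theta,
\]
and on compact subintervals of $(0,\pi)$ the classical expansion
\[
(\sin(\theta/2))^{\alpha+1/2}(\cos(\theta/2))^{\beta+1/2}\,p_k^{(\alpha,\beta)}(\cos\theta)=\sqrt{2/\pi}\,\cos(N_k\theta+\gamma_\alpha)+O(k^{-1}),
\]
with $N_k=k+(\alpha+\beta+1)/2$ and $\gamma_\alpha=-(2\alpha+1)\pi/4$, causes the two weight powers to cancel exactly. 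Hence the main contribution to the integrand is $\tfrac{2^{\alpha+\beta+2}}{\pi}\cos(N_m\theta+\gamma_\alpha)\cos(N_n\theta+\gamma_\alpha)\,e^{-(1-\cos\theta)t}$, which the product-to-sum identity splits into a piece oscillating at the integer frequency $m-n$ and one at the (generally non-integer) frequency $m+n+\alpha+\beta+1$.

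The first piece drives the estimate. Writing $f(\theta)=e^{-(1-\cos\theta)t}$ so that $f'(\theta)=-t\sin\theta\,f(\theta)$, two integrations by parts against $\cos((m-n)\theta)$ yield
\[
\int_0^\pi f(\theta)\cos((m-n)\theta)\,d\theta=\frac{t}{(m-n)^2}\int_0^\pi[\cos\theta-t\sin^2\theta]\,f(\theta)\cos((m-n)\theta)\,d\theta,
\]
where every boundary term vanishes: in the first IBP because $\sin((m-n)\theta)$ vanishes at $\theta=0,\pi$ for $m-n\in\mathbb{Z}$, and in the second because the integrand there carries an extra $\sin\theta$ factor that vanishes at the endpoints. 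A direct computation using $1-\cos\theta=2\sin^2(\theta/2)$ (which Gaussianizes the exponential as $\theta\to 0$) shows that $t\int_0^\pi f\,d\theta\le Ct^{1/2}$ and $t^2\int_0^\pi\sin^2\theta\,f\,d\theta\le Ct^{1/2}$ uniformly in $t>0$, so the right-hand side above is majorized by $Ct^{1/2}/(m-n)^2$. The sister frequency $m+n+\alpha+\beta+1$ is treated in parallel, up to some phase bookkeeping, and produces a contribution of the same or smaller size since $m+n+\alpha+\beta+1\gtrsim|m-n|$.

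The principal obstacle is not this bulk oscillatory argument but the error terms in the Darboux expansion together with the endpoint regions, where the expansion breaks down and where the non-integer-frequency piece produces boundary terms only of size $|m-n|^{-1}$. I would resolve this by splitting $(0,\pi)=[0,\eta]\cup[\eta,\pi-\eta]\cup[\pi-\eta,\pi]$ with $\eta\sim 1/|m-n|$, running the oscillatory argument on the bulk and invoking the standard pointwise bounds $|p_k^{(\alpha,\beta)}(\cos\theta)|\le Ck^{\alpha+1/2}$ for $\theta\in[0,c/k]$ and $|p_k^{(\alpha,\beta)}(\cos\theta)|\le C(k\sin(\theta/2))^{-\alpha-1/2}$ for $\theta\in[c/k,\pi/2]$ (and their symmetric counterparts near $\pi$) on the caps. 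The cap contributions and the boundary terms from the bulk IBPs are thereby absorbed into the target $Ct^{1/2}/|m-n|^2$; the hypothesis $\alpha,\beta\ge-1/2$ is precisely what makes those pointwise bounds integrable against the weight $(\sin(\theta/2))^{2\alpha+1}(\cos(\theta/2))^{2\beta+1}$ in the right manner and validates the form of the Darboux expansion being used.
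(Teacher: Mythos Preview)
Your oscillatory-integral strategy recovers the $|m-n|^{-2}$ decay for the leading difference-frequency piece, but it cannot produce the factor $t^{1/2}$ uniformly in $t$. The obstruction is that the pieces you propose to bound separately --- the sum-frequency term, the Darboux remainders, and the cap integrals --- do not individually vanish at $t=0$; only the full kernel does, by orthogonality. Concretely, on the cap $[0,\eta]$ with $\eta\sim 1/|m-n|$ the pointwise bounds you quote make the integrand $O(1)$ (the weight $(\sin(\theta/2))^{2\alpha+1}$ and the two factors $\theta^{-\alpha-1/2}$ coming from $p_m,p_n$ cancel exactly), so the cap contributes $O(\eta)=O(1/|m-n|)$ with no power of $t$ attached. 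Likewise, the boundary term from the first integration by parts against the non-integer frequency $N=m+n+\alpha+\beta+1$ is $f(\theta)\sin(N\theta+2\gamma_\alpha)/N$ at the endpoints of the bulk interval, again $O(1/(m+n))$ with no $t$ in front. For small $t$ both of these swamp $Ct^{1/2}/|m-n|^2$. The cancellation that makes $K_t$ small as $t\to 0$ lives \emph{between} the sum-frequency main term and the Darboux error terms, and your scheme of estimating them in isolation destroys it; the sentence ``the cap contributions and the boundary terms from the bulk IBPs are thereby absorbed into the target $Ct^{1/2}/|m-n|^2$'' is precisely the step that fails.

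The paper proceeds instead via an integration-by-parts identity (Lemma~\ref{lem:aux-parts}) applied directly to the full kernel, written as $K_t^{(\alpha,\beta)}(m,n)=w_m^{(\alpha,\beta)}w_n^{(\alpha,\beta)}\,\mathfrak{I}_t^{(\alpha,\beta,\alpha,\beta,\alpha,\beta)}(m,n)$. In the first application the parameters satisfy $a=A=\alpha$ and $b=B=\beta$, so every term in the identity except the two carrying an explicit factor $t$ drops out, yielding
\[
|K_t^{(\alpha,\beta)}(m,n)|\le \frac{Ct}{|n-m|}\,w_n^{(\alpha,\beta)}w_m^{(\alpha,\beta)}\bigl(|\mathfrak{I}_1|+|\mathfrak{I}_2|\bigr)
\]
with a clean factor of $t$ already in hand. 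A second application of the same lemma extracts another $|n-m|^{-1}$, and then the uniform bound $|p_k^{(a,b)}(x)|\le C(1-x)^{-a/2-1/4}(1+x)^{-b/2-1/4}$ together with $\int_0^1 e^{-t(1-x)}(1-x)^{-1/2}\,dx\le Ct^{-1/2}$ converts the prefactor $t$ into $t^{1/2}$. The mechanism that isolates the $t$ is therefore algebraic --- the matching of the measure exponents with the polynomial exponents in the first use of Lemma~\ref{lem:aux-parts} --- rather than an asymptotic one, and that is exactly what your argument lacks.
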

The proof of this result will given in the last section and will be a consequence of Lemma \ref{lem:aux-parts}.

The main tool to prove the positivity of $W_t^{(\alpha,\beta)}$ is a well-known linearization formula for the product of two Jacobi polynomials due to G. Gasper. For the normalized polynomials $p_n^{(\alpha,\beta)}$, $n\geq 0$, $\alpha$, $\beta>-1$, it reads as follows
\begin{equation}
\label{eq:Gasper}
p_m^{(\alpha,\beta)}(x)p_n^{(\alpha,\beta)}(x)=\sum_{k=|m-n|}^{m+n}c(k,n,m,\alpha,\beta)p_k^{(\alpha,\beta)}(x),
\end{equation}
where the coefficients $c(k,n,m,\alpha,\beta)$ are non-negative if and only if $(\alpha,\beta)\in V$. Here we say that $(\alpha,\beta)$ belongs to the set $V$ if $\alpha,\beta>-1$, $\alpha\ge \beta$ and
\[
(\alpha+\beta+1)(\alpha+\beta+4)^2(\alpha+\beta+6)\ge (\alpha-\beta)^2((\alpha+\beta+1)^2-7(\alpha+\beta+1)-24).
\]
The previous general result was given in \cite{Gasper2}. The result in \cite{Gasper1} establishes the positivity of the coefficients $c(k,n,m,\alpha,\beta)$ under the simpler (but less general) conditions $\alpha\ge \beta$ and $\alpha+\beta\ge -1$.

\begin{thm}
\label{thm:positivity}
Let $\alpha\ge \beta\ge -1/2$. Then for each non-negative sequence $f\in \ell^\infty(\mathbb{N})$, the heat operator $W_t^{(\alpha,\beta)}f$, with $t>0$, is non-negative.
\end{thm}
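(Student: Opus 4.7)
The plan is to reduce the claim to the pointwise non-negativity of the kernel $K_t^{(\alpha,\beta)}(m,n)$ and then to combine Gasper's linearization formula with the Rodrigues representation of the Jacobi polynomials. First, I would use Lemma~\ref{lem:acot-pos} to extend the definition of $W_t^{(\alpha,\beta)}$ to sequences in $\ell^\infty(\mathbb{N})$: the bound $|K_t^{(\alpha,\beta)}(m,n)|\le Ct^{1/2}/|m-n|^2$ for $m\ne n$, together with $|K_t^{(\alpha,\beta)}(n,n)|\le 1$ (which is immediate from $0\le e^{-(1-x)t}\le 1$ and the $L^2(d\mu_{\alpha,\beta})$-normalization of $p_n^{(\alpha,\beta)}$), makes the series $\sum_{m} f(m) K_t^{(\alpha,\beta)}(m,n)$ absolutely convergent for $f\in\ell^\infty(\mathbb{N})$. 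Consequently, the desired positivity follows at once, term by term, from the inequality $K_t^{(\alpha,\beta)}(m,n)\ge 0$ for all $m,n\ge 0$ and $t>0$.

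To prove the non-negativity of the kernel, I would substitute Gasper's linearization formula \eqref{eq:Gasper} into the integral defining $K_t^{(\alpha,\beta)}(m,n)$. Because $\alpha\ge\beta\ge -1/2$ forces $\alpha+\beta\ge -1$, the simpler sufficient condition from \cite{Gasper1} applies and the coefficients $c(k,n,m,\alpha,\beta)$ are non-negative, so interchanging the finite sum with the integral gives
\[
K_t^{(\alpha,\beta)}(m,n)=\sum_{k=|m-n|}^{m+n}c(k,n,m,\alpha,\beta)\int_{-1}^1 e^{-(1-x)t}p_k^{(\alpha,\beta)}(x)\,d\mu_{\alpha,\beta}(x).
\]
The problem is thereby reduced to showing that
\[
I_k(t):=\int_{-1}^1 e^{-(1-x)t}p_k^{(\alpha,\beta)}(x)\,d\mu_{\alpha,\beta}(x)\ge 0
\]
for every $k\ge 0$ and $t>0$.

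For this last step I would invoke Rodrigues' formula to write $(1-x)^\alpha(1+x)^\beta P_k^{(\alpha,\beta)}(x)=\frac{(-1)^k}{2^k k!}\frac{d^k}{dx^k}[(1-x)^{\alpha+k}(1+x)^{\beta+k}]$ and then integrate by parts $k$ times. Since $\alpha,\beta>-1$, every intermediate derivative of $(1-x)^{\alpha+k}(1+x)^{\beta+k}$ still carries a positive power of $(1-x)$ at $x=1$ and of $(1+x)$ at $x=-1$, so all boundary terms vanish. Each integration by parts transfers one derivative onto $e^{-(1-x)t}$, producing a factor $t$ and a sign $-1$; the $k$ signs combine with the $(-1)^k$ coming from Rodrigues to cancel, leaving
\[
I_k(t)=\frac{w_k^{(\alpha,\beta)}\,t^k}{2^k k!}\int_{-1}^1 e^{-(1-x)t}(1-x)^{\alpha+k}(1+x)^{\beta+k}\,dx\ge 0,
\]
which is manifestly non-negative.

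The conceptual obstacle is that the integrand $e^{-(1-x)t}p_m^{(\alpha,\beta)}p_n^{(\alpha,\beta)}$ oscillates, so the sign of $K_t^{(\alpha,\beta)}(m,n)$ is invisible from the definition; the point of the proof is that Gasper's theorem, valid precisely in the stated parameter range, converts the oscillating product into a non-negative combination of single polynomials, while the smoothness (and strict positivity) of $e^{-(1-x)t}$ is exactly what is needed to make the one-polynomial integrals $I_k(t)$ non-negative via Rodrigues. Once these two ingredients are in place, everything assembles immediately.
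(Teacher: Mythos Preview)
Your proposal is correct and follows essentially the same route as the paper's own proof: invoke Lemma~\ref{lem:acot-pos} to justify the extension to $\ell^\infty(\mathbb{N})$, reduce to $K_t^{(\alpha,\beta)}(m,n)\ge 0$, apply Gasper's linearization to reduce further to the single-polynomial integrals, and evaluate those via Rodrigues' formula and $k$-fold integration by parts. Your added details (the diagonal bound $|K_t^{(\alpha,\beta)}(n,n)|\le 1$, the explicit vanishing of boundary terms, and the sign bookkeeping) are all accurate and merely make explicit what the paper leaves implicit.
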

\begin{proof}
The operator $W_t^{(\alpha,\beta)}$ is well defined for sequences in $\ell^\infty(\mathbb{N})$ by Lemma \ref{lem:acot-pos}. In order to prove the result it suffices to check that the kernel $K_t^{(\alpha,\beta)}$ is non-negative. By using the linearization formula \eqref{eq:Gasper}  we can express the kernel in the following way
\begin{equation}\label{eq:liner}
K_t^{(\alpha,\beta)}(m,n)=\sum_{k=|m-n|}^{m+n}c(k,n,m,\alpha,\beta)\int_{-1}^1 e^{-(1-x)t}p_k^{(\alpha,\beta)}(x)\, d\mu_{\alpha,\beta}(x),
\end{equation}
with $c(k,n,m,\alpha,\beta)\ge 0$. So, by using that
\[
h^{(\alpha,\beta)}_{t}(k):=\int_{-1}^1 e^{-(1-x)t}p_k^{(\alpha,\beta)}(x)\, d\mu_{\alpha,\beta}(x)=e^{-t}w_{k}^{(\alpha,\beta)}\int_{-1}^1 e^{xt}P_k^{(\alpha,\beta)}(x)\, d\mu_{\alpha,\beta}(x)
\]
the proof reduces to show the non-negativity of the last integral, but it is almost immediate from Rodrigues' formula and integrating $k$ times by parts. Indeed,
\begin{align*}
\int_{-1}^{1}e^{xt}P_{k}^{(\alpha,\beta)}(x)\,d\mu_{\alpha,\beta}(x)
	&=\frac{(-1)^{k}}{2^{k}k!}
		\int_{-1}^{1}e^{xt}\frac{d^{k}}{dx^{k}}\left((1-x)^{\alpha+k}(1+x)^{\beta+k}\right)\,dx \\
	&=\frac{t^{k}}{2^{k}k!}\int_{-1}^{1}e^{xt}(1-x)^{\alpha+k}(1+x)^{\beta+k}\,dx,
\end{align*}
which is clearly non-negative.

%To prove it, we consider the identity \cite[p. 572, eq.~10]{PBM}
%\begin{multline*}
%\int_{-a}^{a}(x+a)^{\delta-1}(a-x)^\rho e^{-p(x+a)^{r}}P_n^{(\rho,\sigma)}\Big(\frac{x}{a}\Big)\, dx=
%\frac{(-1)^n\Gamma(\rho+n+1)}{n!}\\\times (2a)^{\delta+\rho}\sum_{j=0}^{\infty}\frac{(\sigma-\delta-rj+1)_n}{j!}
%\frac{\Gamma(\delta+rj)}{\Gamma(\delta+\rho+rj+n+1)}(-2^ra^rp)^j,
%\end{multline*}
%which holds under the restrictions $a,r,\delta>0$ and $\rho>-1$ and where here $(\cdot)_{n}$ denotes the Pochhammer symbol. Thus, taking the parameters $a=1$, $\delta=\beta+1$, $\rho=\alpha$, $p=-t$, $r=1$, $n=k$, and $\sigma=\beta$ in the previous identity and using the fact that
%\[
%\frac{(-j)_k}{j!}=\begin{cases}\dfrac{(-1)^k}{(j-k)!}, & j\ge k,\\[4pt]
%0,& j<k,\end{cases}
%\]
%we deduce that
%\begin{multline*}
%\int_{-1}^1 e^{(1+x)t}P_k^{(\alpha,\beta)}(x)\, d\mu_{\alpha,\beta}(x)\\
%\begin{aligned}
%&=\frac{2^{\alpha+\beta+1}(-1)^k\Gamma(\alpha+k+1)}{k!}
%\sum_{j=0}^{\infty}\frac{(-j)_k}{j!}\frac{\Gamma(\beta+j+1)}{\Gamma(\alpha+\beta+k+j+2)}(2t)^j\\
%&=\frac{2^{\alpha+\beta+1}\Gamma(\alpha+k+1)}{k!}
%\sum_{j=k}^{\infty}\frac{\Gamma(\beta+j+1)}{\Gamma(\alpha+\beta+k+j+2)}\frac{(2t)^j}{(j-k)!}.
%\end{aligned}
%\end{multline*}
%In this way, each integral in \eqref{eq:liner} is non-negative and the proof is finished.
\end{proof}

It is worth pointing out that by means of \eqref{eq:Gasper} it is possible to define a positive convolution operator in $\ell^{1}(\mathbb{N})$ \cite[Corollary 1]{Gasper2}. Actually, this procedure is extensible to other orthogonal polynomials and it has been widely studied for example in~\cite{Szwarc}, where general orthogonal polynomials are considered. This convolution structure is used in \cite[eq. 12]{Bet-et-al} to study several questions related to the heat semigroup in the ultraspherical setting. Regarding the Jacobi one, the convolution operator is given by
$$
(f\ast g)(n)=\sum_{m=0}^{\infty}f(m)\tau^{(\alpha,\beta)}_{n}g(m), \quad n\in\mathbb{N},
$$
where $\tau^{(\alpha,\beta)}_{n}g(m)$ denotes the translation operator
$$
\tau^{(\alpha,\beta)}_{n}g(m)=\sum_{k=|m-n|}^{m+n}c(k,m,n,\alpha,\beta)g(k).
$$
Rewriting the equation \eqref{eq:liner} in terms of the translation operator as
$
K^{(\alpha,\beta)}_{t}(m,n)=\tau^{(\alpha,\beta)}_{n}h^{(\alpha,\beta)}_{t}(m)
$
it is straightforward to give an expression for the heat semigroup as a convolution by
$$
W^{(\alpha,\beta)}_{t}f(n)=(f\ast h^{(\alpha,\beta)}_{t})(n).
$$
However, we will not follow this approach to analyse the heat semigroup.

\begin{rem}
It is interesting to observe that the proof also works if we consider sequences in $\ell^2(\mathbb{N})$ instead of in $\ell^\infty(\mathbb{N})$. In that case, the hypotheses on the parameters $\alpha$ and $\beta$ could be improved to $(\alpha,\beta)\in V$.
\end{rem}
%%%%%%%%%%%%%%%%%%%%%%%%%%%%%%%%%%%%%%%%%%%%%%%%%%%%%%%%%%%%%%%
\section{Weighted inequalities for maximal operators}
\label{sec:max}
%%%%%%%%%%%%%%%%%%%%%%%%%%%%%%%%%%%%%%%%%%%%%%%%%%%%%%%%%%%%%%%

We devote this section to analyze some weighted inequalities for the operators $W_{\ast}^{(\alpha,\beta)}$ and $P_{\ast}^{(\alpha,\beta)}$ by using an appropriate discrete vector-valued local Calder\'on-Zygmund theory which has been developed in~\cite{Bet-et-al}. There, the interested reader can find some useful references concerning vector-valued Calder\'{o}n-Zygmund theory.

The next theorem includes mapping properties in weighted $\ell^{p}$-spaces of the heat maximal operator $W_{\ast}^{(\alpha,\beta)}$.

\begin{thm}
\label{th:max}
Let $\alpha,\beta\ge -1/2$ and consider the maximal operator $W_{\ast}^{(\alpha,\beta)}$ defined by \eqref{eq:max-heat}.
\begin{enumerate}
\item[(a)]
If $1<p<\infty$ and $w\in A_p(\mathbb{N})$, then
\begin{equation}
\label{eq:bound-max-heat}
\|W_{\ast}^{(\alpha,\beta)}f\|_{\ell^p(\mathbb{N},w)}\le C \|f\|_{\ell^p(\mathbb{N},w)},
\end{equation}
for all $f\in\ell^{p}(\mathbb{N},w)$.
\item[(b)]
If $w\in A_1(\mathbb{N})$, then
\begin{equation}
\label{eq:bound-max-heat-weak}
\|W_{\ast}^{(\alpha,\beta)}f\|_{\ell^{1,\infty}(\mathbb{N},w)}\le C \|f\|_{\ell^1(\mathbb{N},w)},
\end{equation}
for all $f\in\ell^{1}(\mathbb{N},w)$.
\end{enumerate}
\end{thm}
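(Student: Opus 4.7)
The plan is to view $W_{\ast}^{(\alpha,\beta)}$ as the scalar realization of a vector-valued singular integral operator and then to apply the discrete vector-valued local Calder\'on-Zygmund theory developed in \cite{Bet-et-al}. With $\mathbb{B} = L^{\infty}((0,\infty),dt)$, one defines
\[
T f(n) = \{W_{t}^{(\alpha,\beta)} f(n)\}_{t>0}, \qquad n \in \mathbb{N},
\]
so that $\|T f(n)\|_{\mathbb{B}} = W_{\ast}^{(\alpha,\beta)} f(n)$. Formally $T$ is associated with the $\mathbb{B}$-valued kernel
\[
\mathcal{K}(m,n) = \{K_{t}^{(\alpha,\beta)}(m,n)\}_{t>0}, \qquad m\neq n.
\]
The $\ell^{2}$-boundedness of $W_{\ast}^{(\alpha,\beta)}$ recorded in \eqref{ec:L2-max} (from the remark after Theorem~\ref{thm:semigroup}) is the input on the operator side that the theory requires.

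To invoke the theorem of \cite{Bet-et-al}, what is left is to verify size and smoothness (H\"ormander-type) conditions for $\mathcal{K}$ in the appropriate local region. Concretely, I would establish
\[
\sup_{t>0} |K_{t}^{(\alpha,\beta)}(m,n)| \le \frac{C}{|m-n|}
\]
together with, for $n'=n\pm 1$ and the symmetric analogue in the first variable,
\[
\sup_{t>0} |K_{t}^{(\alpha,\beta)}(m,n) - K_{t}^{(\alpha,\beta)}(m,n')| \le \frac{C}{|m-n|^{2}}.
\]
These are the technical estimates that I would defer to Section~\ref{sec:tech}. The common tool is the integral representation
\[
K_{t}^{(\alpha,\beta)}(m,n) = \int_{-1}^{1} e^{-(1-x)t}\, p_{m}^{(\alpha,\beta)}(x)\, p_{n}^{(\alpha,\beta)}(x)\, d\mu_{\alpha,\beta}(x),
\]
combined with repeated integration by parts exploiting the differential structure of the Jacobi weight and standard pointwise and derivative bounds for $p_{n}^{(\alpha,\beta)}$ valid for $\alpha,\beta\ge -1/2$; Lemma~\ref{lem:acot-pos} already records the preliminary estimate that handles the regime $t\lesssim |m-n|^{2}$.

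Once the local kernel conditions are in place, the discrete vector-valued local Calder\'on-Zygmund theorem of \cite{Bet-et-al} delivers both conclusions at once: the strong-type bound in $\ell^{p}(\mathbb{N},w)$ for $w\in A_{p}(\mathbb{N})$ when $1<p<\infty$, and the weak-type $(1,1)$ estimate with respect to weights $w\in A_{1}(\mathbb{N})$. The genuinely delicate point, and the main obstacle, is the smoothness condition uniformly in $t$: the naive bound $Ct^{1/2}/|m-n|^{2}$ of Lemma~\ref{lem:acot-pos} becomes useless as $t\to\infty$, so one has to split the time integration into regimes and recover the decay in $|m-n|$ by extracting cancellation between the factors $p_{m}^{(\alpha,\beta)}$ and $p_{n}^{(\alpha,\beta)}$ after integrating by parts a suitable number of times.
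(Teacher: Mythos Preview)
Your proposal is correct and follows essentially the same route as the paper: set up the vector-valued operator $T$ with $\mathbb{B}=L^{\infty}(0,\infty)$, use \eqref{ec:L2-max} for the $\ell^{2}\to\ell^{2}_{\mathbb{B}}$ boundedness, verify the local size and smoothness conditions on the kernel (deferred to Lemmas~\ref{lem:heat-bound} and~\ref{lem:heat-bound-diff}), and then invoke Theorem~\ref{thm:CZ}. One small remark on your closing paragraph: the paper does not split into time regimes to upgrade the $t^{1/2}/|m-n|^{2}$ bound; instead it applies the integration-by-parts identity of Lemma~\ref{lem:aux-parts} twice, which produces either a factor of $t$ or a bounded constant at each step, and the resulting integrals $\int_{-1}^{1}e^{-t(1-x)}t^{k}(1-x)^{k-1/2}\,dx$ are uniformly bounded in $t$ directly.
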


As an immediate consequence of Theorem \ref{th:max} and the pointwise domination
\[
P_{\ast}^{(\alpha,\beta)}f(n)\le W_{\ast}^{(\alpha,\beta)}f(n),\quad n\geq 0,\quad\alpha,\beta>-1,
\]
which follows from the definition of the Poisson operator (see~\eqref{eq:poisson}), we deduce the following result for the maximal operator of the Poisson operator $P_t^{(\alpha,\beta)}$.

\begin{cor}
\label{cor:max}
Let $\alpha,\beta\ge -1/2$ and consider the maximal operator $P_{\ast}^{(\alpha,\beta)}$ defined by \eqref{eq:max-Poisson}.
\begin{enumerate}
\item[(a)]
If $1<p<\infty$ and $w\in A_p(\mathbb{N})$, then
\begin{equation*}
\|P_{\ast}^{(\alpha,\beta)}f\|_{\ell^p(\mathbb{N},w)}\le C \|f\|_{\ell^p(\mathbb{N},w)},
\end{equation*}
for all $f\in\ell^{p}(\mathbb{N},w)$.
\item[(b)]
If $w\in A_1(\mathbb{N})$, then
\begin{equation*}
\|P_{\ast}^{(\alpha,\beta)}f\|_{\ell^{1,\infty}(\mathbb{N},w)}\le C \|f\|_{\ell^1(\mathbb{N},w)},
\end{equation*}
for all $f\in\ell^{1}(\mathbb{N},w)$.
\end{enumerate}
\end{cor}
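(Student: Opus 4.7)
The plan is to derive Corollary~\ref{cor:max} directly from Theorem~\ref{th:max} via the subordination identity \eqref{eq:poisson}. The key intermediate step is the pointwise comparison already announced in the paragraph preceding the corollary, namely
$$
P_{\ast}^{(\alpha,\beta)}f(n) \le W_{\ast}^{(\alpha,\beta)}f(n), \qquad n\in\mathbb{N}.
$$
To justify this, I would start from \eqref{eq:poisson}, bring absolute values inside the integral, and bound the integrand using $|W_{t^2/(4u)}^{(\alpha,\beta)}f(n)| \le W_{\ast}^{(\alpha,\beta)}f(n)$, which is independent of $u$ and can therefore be pulled outside. The remaining scalar factor $\frac{1}{\sqrt{\pi}}\int_0^\infty u^{-1/2}e^{-u}\,du$ equals $\Gamma(1/2)/\sqrt{\pi}=1$, so $|P_t^{(\alpha,\beta)}f(n)|\le W_{\ast}^{(\alpha,\beta)}f(n)$ for every $t>0$, and taking supremum in $t$ yields the claimed comparison.

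With this pointwise domination in hand, parts (a) and (b) follow by monotonicity of the $\ell^p(\mathbb{N},w)$ norm and the $\ell^{1,\infty}(\mathbb{N},w)$ quasi-norm, combined with the corresponding parts of Theorem~\ref{th:max}. Explicitly, for $1<p<\infty$ and $w\in A_p(\mathbb{N})$ one gets
$$
\|P_{\ast}^{(\alpha,\beta)}f\|_{\ell^p(\mathbb{N},w)} \le \|W_{\ast}^{(\alpha,\beta)}f\|_{\ell^p(\mathbb{N},w)} \le C\|f\|_{\ell^p(\mathbb{N},w)},
$$
and the analogous chain with $\ell^{1,\infty}(\mathbb{N},w)$ on the left delivers the weak-type bound when $w\in A_1(\mathbb{N})$.

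There is really no serious obstacle here: the entire weight of the corollary sits on Theorem~\ref{th:max} together with the classical subordination principle, which transfers $\ell^p$-boundedness from the heat maximal operator to the Poisson maximal operator. The only preliminary item worth recording is that $P_t^{(\alpha,\beta)}f(n)$ is well defined whenever $W_{\ast}^{(\alpha,\beta)}f(n)<\infty$; since Theorem~\ref{th:max} guarantees finiteness of $W_{\ast}^{(\alpha,\beta)}f(n)$ at almost every $n$ on the weighted scales in play, the subordination integral converges absolutely and the interchange of supremum and integral used above is legitimate by monotone convergence.
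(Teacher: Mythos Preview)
Your proposal is correct and follows exactly the approach the paper uses: the corollary is stated there as an immediate consequence of Theorem~\ref{th:max} together with the pointwise domination $P_{\ast}^{(\alpha,\beta)}f(n)\le W_{\ast}^{(\alpha,\beta)}f(n)$, which the paper says ``follows from the definition of the Poisson operator''. Your write-up simply makes that one-line deduction explicit by carrying out the subordination estimate and noting $\tfrac{1}{\sqrt{\pi}}\int_0^\infty u^{-1/2}e^{-u}\,du=1$.
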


We should remark here that the previous corollary could be stated for other subordinated semigroups. Due to \eqref{eq:poisson}, it is clear that Poisson semigroup is subordinated of the heat one, so the properties related to weighted norm inequalities for the latter could be transferred to the former (see for instance \cite[Theorem 1', p. 46]{Stein1}. Unsurprisingly this transfer property not only works for the Poisson semigroup, but also for other subordinated semigroup of the heat one. As it is explained in \cite[Ch. IX, sec. 11]{Yosida}, one possible way to construct subordinated semigroups of $W_{t}^{(\alpha,\beta)}$ is essentially by means of the positive powers of the infinitesimal generator $\mathcal{J}^{(\alpha,\beta)}$. To be more specific, the semigroup with infinitesimal generator $-(-\mathcal{J}^{(\alpha,\beta)})^{\sigma}$, where $0<\sigma<1$, is subordinated of  $W_{t}^{(\alpha,\beta)}$. In the particular case of Poisson semigroup, its infinitesimal generator is given by $-(-\mathcal{J}^{(\alpha,\beta)})^{1/2}$.

%
%\textcolor{red}{
%\begin{rem}
%The Poisson semigroup can be understood as a subordinated semigroup of the heat one. From this point of view, all the properties related to weighted norm inequalities for the heat semigroup can be transferred to its subordinated, so the corollary naturally follows. A way to construct subordinated semigroups is explained in \cite[Ch. IX, sec. 11]{Yosida}, where they are defined, essentially, by means of positive powers of the infinitesimal generator $\mathcal{J}^{(\alpha,\beta)}$; to be specific, their infinitesimal generator are given by $-(-\mathcal{J}^{(\alpha,\beta)})^{\sigma}$ with $0<\sigma<1$. The Poisson semigroup corresponds with the semigroup generated by $-(-\mathcal{J}^{(\alpha,\beta)})^{1/2}$
%\end{rem}
%}

As we have already mentioned, the proof of Theorem \ref{th:max} relies on an appropriate local theory for discrete Banach space valued Calder\'on-Zygmund operators which is presented in \cite{Bet-et-al}. Although the $\ell^{2}$-boundedness mapping properties are derived in a similar way as in that paper and some ideas to reduce the proof of Theorem~\ref{th:max} are taken from there, our method for showing Calder\'{o}n-Zygmund estimates is, however, completely new and more straightforward.

For the reader's convenience, it is appropriate to recall some of the basic aspects of this local theory.

Suppose that $\mathbb{B}_1$ and $\mathbb{B}_2$ are Banach spaces. We denote by $\mathcal{L}(\mathbb{B}_1,\mathbb{B}_2)$ the space of bounded linear operators from $\mathbb{B}_1$ into $\mathbb{B}_2$. Let us suppose that
\[
K:(\mathbb{N}\times\mathbb{N})\setminus D \longrightarrow \mathcal{L}(\mathbb{B}_1,\mathbb{B}_2),
\]
where $D:=\{(n,n):n\in \mathbb{N}\}$, is measurable and that for certain positive constant $C$ and for each $n$, $m\in \mathbb{N}$, the following conditions hold.
\begin{enumerate}
\item[(a)] the size condition:
\[
\|K(n,m)\|_{\mathcal{L}(\mathbb{B}_1,\mathbb{B}_2)}\le \frac{C}{|n-m|},
\]
\item[(b)] the regularity properties:
\begin{enumerate}
\item[(b1)]
\[
\|K(n,m)-K(l,m)\|_{\mathcal{L}(\mathbb{B}_1,\mathbb{B}_2)}\le C \frac{|n-l|}{|n-m|^2},\quad |n-m|>2|n-l|, \frac{m}{2}\le n,l\le \frac{3m}{2},
\]
\item[(b2)]
\[
\|K(m,n)-K(m,l)\|_{\mathcal{L}(\mathbb{B}_1,\mathbb{B}_2)}\le C \frac{|n-l|}{|n-m|^2},\quad |n-m|>2|n-l|, \frac{m}{2}\le n,l\le \frac{3m}{2}.
\]
\end{enumerate}
\end{enumerate}
A kernel $K$ satisfying conditions (a) and (b) is called a local $\mathcal{L}(\mathbb{B}_1,\mathbb{B}_2)$-standard kernel. For a Banach space $\mathbb{B}$ and a weight $w=\{w(n)\}_{n\ge 0}$, we consider the space
\[
\ell^{p}_{\mathbb{B}}(\mathbb{N},w)=\left\{ \text{$\mathbb{B}$-valued sequences } f=\{f(n)\}_{n\ge 0}: \{\|f(n)\|_{\mathbb{B}}\}_{n\ge 0}\in \ell^p(\mathbb{N},w)\right\}
\]
for $1\le p<\infty$, and
\[
\ell^{1,\infty}_{\mathbb{B}}(\mathbb{N},w)=\left\{ \text{$\mathbb{B}$-valued sequences } f=\{f(n)\}_{n\ge 0}: \{\|f(n)\|_{\mathbb{B}}\}_{n\ge 0}\in \ell^{1,\infty}(\mathbb{N},w)\right\}.
\]
As usual, we simply write $\ell_{\mathbb{B}}^r(\mathbb{N})$ and $\ell^{1,\infty}_{\mathbb{B}}(\mathbb{N})$ when $w(n)=1$ for all $n\in \mathbb{N}$. Also, by $\mathbb{B}_0^{\mathbb{N}}$ we represent the space of $\mathbb{B}$-valued sequences $f=\{f(n)\}_{n\ge 0}$ such that $f(n)=0$, with $n>j$, for some $j\in \mathbb{N}$.
\begin{thm}[Theorem 2.1 in \cite{Bet-et-al}]
\label{thm:CZ}
Let $\mathbb{B}_1$ and $\mathbb{B}_2$ be Banach spaces. Suppose that $T$
is a linear and bounded operator from $\ell_{\mathbb{B}_1}^r(\mathbb{N})$ into $\ell_{\mathbb{B}_2}^r(\mathbb{N})$, for some $1<r<\infty$, and such that there exists a local $\mathcal{L}(\mathbb{B}_1,\mathbb{B}_2)$-standard kernel $K$ such that, for every sequence $f\in (\mathbb{B}_1)_0^{\mathbb{N}}$,
\[
Tf(n)=\sum_{m=0}^{\infty}K(n,m)\cdot f(m),
\]
for every $n\in \mathbb{N}$ such that $f(n)=0$. Then,
\begin{enumerate}
\item[(i)] for every $1< p <\infty$ and $w\in A_p(\mathbb{N})$ the operator $T$
can be extended from $\ell_{\mathbb{B}_1}^r(\mathbb{N})\cap \ell_{\mathbb{B}_1}^p(\mathbb{N},w)$
to $\ell_{\mathbb{B}_1}^p(\mathbb{N},w)$ as a bounded operator from $\ell_{\mathbb{B}_1}^p(\mathbb{N},w)$ into $\ell_{\mathbb{B}_2}^p(\mathbb{N},w)$.

\item[(ii)] for every $w\in A_1(\mathbb{N})$ the operator $T$
can be extended from $\ell_{\mathbb{B}_1}^r(\mathbb{N})\cap \ell_{\mathbb{B}_1}^1(\mathbb{N},w)$
to $\ell_{\mathbb{B}_1}^1(\mathbb{N},w)$ as a bounded operator from $\ell_{\mathbb{B}_1}^p(\mathbb{N},w)$ into $\ell_{\mathbb{B}_2}^{1,\infty}(\mathbb{N},w)$.
\end{enumerate}
\end{thm}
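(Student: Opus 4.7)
The plan is to apply the discrete vector-valued local Calderón--Zygmund theory of Theorem \ref{thm:CZ} to the linearisation of $W_{\ast}^{(\alpha,\beta)}$. Take $\mathbb{B}_1 = \mathbb{R}$ and $\mathbb{B}_2 = L^\infty((0,\infty), dt)$, and define
$$
\mathcal{W} f(n) := \bigl\{W_t^{(\alpha,\beta)} f(n)\bigr\}_{t>0}, \qquad n\in \mathbb{N}.
$$
Since $\|\mathcal{W} f(n)\|_{\mathbb{B}_2} = W_{\ast}^{(\alpha,\beta)} f(n)$, the weighted scalar bounds \eqref{eq:bound-max-heat} and \eqref{eq:bound-max-heat-weak} are equivalent to the $\mathbb{B}_2$-valued bounds that Theorem \ref{thm:CZ} produces once its hypotheses are verified with $r = 2$.

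The $\ell^2$-boundedness hypothesis of Theorem \ref{thm:CZ} is, after identifying $\ell^2_{\mathbb{B}_2}(\mathbb{N})$ with scalar sequences through the $\mathbb{B}_2$-norm, exactly the inequality \eqref{ec:L2-max}. Thus the work is concentrated in verifying that the operator-valued kernel
$$
K(n,m) := \bigl\{K_t^{(\alpha,\beta)}(n,m)\bigr\}_{t>0}, \qquad n\neq m,
$$
is a local $\mathcal{L}(\mathbb{R},\mathbb{B}_2)$-standard kernel. Concretely one must establish the size bound
$$
\sup_{t>0}\, \bigl|K_t^{(\alpha,\beta)}(n,m)\bigr| \le \frac{C}{|n-m|},
$$
and, for $m/2\le n,l\le 3m/2$ with $|n-m|>2|n-l|$, the Hölder-type regularity estimate
$$
\sup_{t>0}\,\bigl|K_t^{(\alpha,\beta)}(n,m) - K_t^{(\alpha,\beta)}(l,m)\bigr| \le C\,\frac{|n-l|}{|n-m|^2}.
$$
Condition (b2) on the second entry is automatic from the symmetry $K_t^{(\alpha,\beta)}(n,m) = K_t^{(\alpha,\beta)}(m,n)$, and the representation of $\mathcal{W}f(n)$ through $K$ for $f$ of finite support vanishing at $n$ is immediate from the defining series.

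To obtain the size bound I would dichotomise at $t = |n-m|^2$. For $0 < t \le |n-m|^2$, Lemma \ref{lem:acot-pos} yields $|K_t^{(\alpha,\beta)}(n,m)| \le Ct^{1/2}/|n-m|^2 \le C/|n-m|$. For $t > |n-m|^2$, one exploits that $e^{-(1-x)t}$ concentrates near $x=1$: applying the Rodrigues formula to one factor $p_n^{(\alpha,\beta)}$ and integrating by parts (as in the positivity argument of Theorem \ref{thm:positivity}) gains powers of $t$ that dominate over the polynomial growth of Jacobi polynomials near the endpoint, producing the desired bound. The regularity estimate will be proved in parallel fashion: one writes the discrete difference $K_t^{(\alpha,\beta)}(n,m) - K_t^{(\alpha,\beta)}(l,m)$ as a telescoping sum and applies refined pointwise and integral estimates for the consecutive differences $p_k^{(\alpha,\beta)}(x) - p_{k-1}^{(\alpha,\beta)}(x)$ of normalized Jacobi polynomials, gaining the extra factor $|n-l|/|n-m|$ from the length of the telescope combined with the local restriction $m/2 \le n,l \le 3m/2$. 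All of these technical ingredients belong to Section \ref{sec:tech}.

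The main obstacle is the regularity estimate uniformly in $t>0$: for $t$ large the integrand lives in a thin neighbourhood of $x=1$, where $p_n^{(\alpha,\beta)}$ has its largest values and oscillation, so the cancellation encoded in the consecutive difference $p_n^{(\alpha,\beta)} - p_l^{(\alpha,\beta)}$ must be extracted with care; balancing this against the gain $t^{1/2}$ coming from integration by parts is what produces the quadratic decay in $|n-m|$. Once the size and regularity of $K(n,m)$ are in hand, invoking Theorem \ref{thm:CZ}(i) with $r=2$, $1<p<\infty$, $w\in A_p(\mathbb{N})$ gives part (a), and invoking Theorem \ref{thm:CZ}(ii) with $w\in A_1(\mathbb{N})$ gives part (b).
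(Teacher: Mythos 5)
Your proposal does not prove the statement it was supposed to prove. The statement is Theorem \ref{thm:CZ} itself: the abstract discrete vector-valued local Calder\'on--Zygmund theorem, asserting that \emph{any} linear operator $T$, bounded from $\ell^r_{\mathbb{B}_1}(\mathbb{N})$ into $\ell^r_{\mathbb{B}_2}(\mathbb{N})$ and represented off the diagonal by a local $\mathcal{L}(\mathbb{B}_1,\mathbb{B}_2)$-standard kernel, extends to a bounded operator on $\ell^p_{\mathbb{B}_1}(\mathbb{N},w)$ for every $1<p<\infty$ and $w\in A_p(\mathbb{N})$, with the weak-type $(1,1)$ estimate for $w\in A_1(\mathbb{N})$. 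What you wrote instead is an application of that theorem: you take $\mathbb{B}_1=\mathbb{R}$, $\mathbb{B}_2=L^\infty(0,\infty)$, verify (or defer to Lemmas \ref{lem:heat-bound} and \ref{lem:heat-bound-diff}) the kernel conditions for $K_t^{(\alpha,\beta)}$, use \eqref{ec:L2-max} for the $\ell^2$ hypothesis, and then conclude by ``invoking Theorem \ref{thm:CZ}(i)'' and ``(ii)''. That is a sketch of the proof of Theorem \ref{th:max}, not of Theorem \ref{thm:CZ}; as a proof of Theorem \ref{thm:CZ} it is circular, since its final step assumes exactly the conclusion to be established. Nothing in your text addresses general Banach spaces, a general operator $T$, or a general local standard kernel, which is what the statement is about.

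A genuine proof of Theorem \ref{thm:CZ} cannot use any property of the Jacobi heat kernel. One must run the local Calder\'on--Zygmund machinery in the discrete setting: split $T$ into a global part, where only the size condition (a) is available and which is dominated by discrete Hardy-type operators bounded on $\ell^p(\mathbb{N},w)$ for $A_p$ weights, and a local part supported near the diagonal region $\tfrac{m}{2}\le n\le \tfrac{3m}{2}$, which is handled by a Calder\'on--Zygmund decomposition of the sequence together with the cancellation encoded in the regularity conditions (b1)--(b2), and finally pass from the exponent $r$ to all $1<p<\infty$ and to the weak $(1,1)$ endpoint by the usual weighted theory. Note also that the paper itself does not prove this theorem: it is quoted verbatim from \cite{Bet-et-al} (Theorem 2.1 there), so the correct ``proof'' in the context of this paper is the citation, while the material you reproduced (choice of $\mathbb{B}_2$, the kernel estimates, the dichotomy in $t$, the telescoping for the difference bound) belongs to Sections \ref{sec:max} and \ref{sec:tech}, where the theorem is applied, not proved.
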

Now, set $\mathbb{B}=L^{\infty}(0,\infty)$. In order to prove Theorem~\ref{th:max}, we observe first that the operator
\begin{equation*}
\begin{array}{ccccc}
T & : & \ell^{2}(\mathbb{N}) & \longrightarrow & \ell_{\mathbb{B}}^{2}(\mathbb{N})\\
& & f & \longmapsto & T f(n,t) := W_{t}^{(\alpha,\beta)}f(n),
\end{array}
\end{equation*}
is bounded from $\ell^{2}(\mathbb{N})$ into $\ell_{\mathbb{B}}^{2}(\mathbb{N})$. Indeed, it is a consequence of the $\ell^{2}$-boundedness of the heat maximal operator (see~\eqref{ec:L2-max}) as it happened in the case of ultraspherical expansions in~\cite{Bet-et-al}.

Note that in order to obtain the regularity properties (b1) and (b2) for the kernel $K_t^{(\alpha,\beta)}$ it suffices to prove the inequality
\begin{equation}\label{Calderon-Zygmundn+1}
\| K_{t}^{(\alpha,\beta)}(n+1,m) - K_{t}^{(\alpha,\beta)}(n,m)\|_{L^{\infty}(0,\infty)} \leq \frac{C}{|n-m|^{2}},
\end{equation}
for $n,m\in\mathbb{N}$, $n\neq m$, and $\frac{m}{2}\leq n\leq \frac{3m}{2}$. The proof of this fact is based on the ideas of \cite[p. 14]{Bet-et-al} and it actually works when the norm comes from a general Banach space $\mathbb{B}$.

%We suppose that $n,m,l\in\mathbb{N}, n\neq m, |n-m|>2|n-l|,\frac{m}{2}<n,l<\frac{3m}{2}$ and we are going to prove that
%\begin{equation*}
%\| K_t^{(\alpha,\beta)}(n+1,m) - K_t^{(\alpha,\beta)}(n,m)\|_{L^\infty(0,\infty)} \leq \frac{C}{|n-m|^{2}}
%\end{equation*}
%implies
%\begin{equation*}
%\|K_t^{(\alpha,\beta)}(n,m) - K_t^{(\alpha,\beta)}(l,m)\|_{L^\infty(0,\infty)} \leq C\frac{|n-l|}{|n-m|^{2}}.
%\end{equation*}
Let us see that \eqref{Calderon-Zygmundn+1} implies (b1) (the proof for (b2) is analogous). If $n=l$ the conclusion follows readily. Let us suppose that $n<l$. By the triangle inequality, we obtain
\begin{multline*}
\|K_t^{(\alpha,\beta)}(n,m)-K_t^{(\alpha,\beta)}(l,m)\|_{L^\infty(0,\infty)}\\ \leq \sum_{j=0}^{l-n-1} \|K_t^{(\alpha,\beta)}(n+j,m)-K_t^{(\alpha,\beta)}(n+j+1,m)\|_{L^\infty(0,\infty)}.
\end{multline*}
If $n>m$ we apply \eqref{Calderon-Zygmundn+1} to get the desired estimate. When $n<m$ we apply \eqref{Calderon-Zygmundn+1} and then use that $|n-m|>2|n-l|$ so the result follows. The case $n>l$ is similar and we omit the details.

The following results, whose proofs are postponed to the next section, together with the above-mentioned $\ell^{2}$-boundedness imply Theorem~\ref{th:max}.
\begin{lem}
\label{lem:heat-bound}
Let $\alpha,\beta\ge-1/2$ and $t \ge 0$.  If $n$, $m\in\mathbb{N}$, $n\not= m$, then
\begin{equation}
\label{ec:heat-bound}
|K_t^{(\alpha,\beta)}(n,m)|\le \frac{C}{|n-m|}.
\end{equation}
\end{lem}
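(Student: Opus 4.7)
Without loss of generality assume $n>m$ and set $N:=n-m\geq 1$. My strategy is to combine a small-$t$ spectral/combinatorial bound with a large-$t$ integration-by-parts estimate based on Rodrigues' formula, gluing them at $t\sim cN$ for a small absolute constant $c>0$ to be fixed.

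For $t\le cN$, write $K_t^{(\alpha,\beta)}(m,n)=\langle e^{t\mathcal{J}^{(\alpha,\beta)}}\delta_m,\delta_n\rangle_{\ell^2(\mathbb{N})}$ and expand the exponential in its Taylor series. Since $\mathcal{J}^{(\alpha,\beta)}$ is tridiagonal, the iterate $(\mathcal{J}^{(\alpha,\beta)})^k$ can displace mass by at most $k$ positions, so $\langle(\mathcal{J}^{(\alpha,\beta)})^k\delta_m,\delta_n\rangle=0$ whenever $k<N$. Coupled with the operator bound $\|\mathcal{J}^{(\alpha,\beta)}\|_{\ell^2\to\ell^2}\le 2$ (from the spectral description in Section~\ref{sec:semi}), this gives
\[
|K_t^{(\alpha,\beta)}(m,n)|\le \sum_{k\ge N}\frac{(2t)^k}{k!}\le \frac{(2t)^N}{N!}\,e^{2t},
\]
and Stirling's formula reduces this to $C/N$ once $c$ is chosen small enough.

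For $t>cN$, I would apply Rodrigues' formula
\[
(1-x)^\alpha(1+x)^\beta P_n^{(\alpha,\beta)}(x)=\frac{(-1)^n}{2^n n!}\,\frac{d^n}{dx^n}\bigl[(1-x)^{\alpha+n}(1+x)^{\beta+n}\bigr]
\]
to the factor $P_n^{(\alpha,\beta)}$ in the integral defining $K_t^{(\alpha,\beta)}(m,n)$ and integrate by parts $n$ times; the boundary terms vanish because $\alpha,\beta>-1$ makes the weight factors $(1-x)^{\alpha+j}(1+x)^{\beta+j}$ and their lower-order derivatives vanish at $\pm 1$. Applying Leibniz's rule and using that $\deg P_m^{(\alpha,\beta)}=m<n$ together with the identity $P_m^{(k)}(x)=\frac{\Gamma(m+k+\alpha+\beta+1)}{2^k\Gamma(m+\alpha+\beta+1)}P_{m-k}^{(\alpha+k,\beta+k)}(x)$ yields an explicit finite-sum representation whose inner integrals are of the form $\int_{-1}^{1}e^{-(1-x)t}\,P_{m-k}^{(\alpha+k,\beta+k)}(x)(1-x)^{\alpha+n}(1+x)^{\beta+n}\,dx$. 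Via the substitution $u=(1-x)t$, sharp bounds on the Jacobi factors on $[-1,1]$, and the combinatorial prefactor $\binom{n}{k}w_m^{(\alpha,\beta)}w_n^{(\alpha,\beta)}/n!$ (which carries a $1/N!$), the finite sum is $O(1/N)$ uniformly in $t>cN$.

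\textbf{Main obstacle.} The subtle point is the sharpness of the large-$t$ estimate. A crude peak-value Laplace bound on the inner integral yields only $O(1/\sqrt{N})$, due to the normalization factors $w_n^{(\alpha,\beta)}\sim n^{1/2}$ combined with $\Gamma(\alpha+n+1)/N!\sim N^{\alpha+m}$. A refined asymptotic analysis tracking both the location of the saddle (which moves from the interior when $t\sim n$ to near $x=1$ when $t\gg n$) and the width of the peak, together with cancellation among the $k$-terms in the Leibniz sum, is what produces the full $1/N$ decay. Matching the two regimes at $t\sim cN$ then produces the $t$-uniform estimate claimed.
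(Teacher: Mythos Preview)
Your small-$t$ argument via the tridiagonal structure of $\mathcal{J}^{(\alpha,\beta)}$ is correct and rather elegant; it yields exponential decay for $t\le cN$.

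The large-$t$ part, however, has a genuine gap that you yourself flag. After the $n$-fold Rodrigues integration by parts you obtain only $O(N^{-1/2})$, and you propose to close the gap by ``cancellation among the $k$-terms in the Leibniz sum'' together with a refined saddle analysis. The cancellation mechanism cannot be the right one: already for $m=0$ the Leibniz sum has the single term $k=0$, so there is nothing to cancel, yet the crude Laplace bound at $t\sim cn$ still gives only $C n^{\alpha+1/2}/t^{\alpha+1}\sim C/\sqrt{n}$. A uniform sharpening of the Laplace estimate over the whole range $t\ge cN$ (with the saddle moving from the interior to the boundary $x=1$) is possible in principle, but it is delicate and you have not carried it out; as written, the large-$t$ half of the proof is a heuristic, not an argument.

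The paper avoids this difficulty entirely by a different and much shorter route. Instead of integrating by parts $n$ times, it integrates by parts \emph{once} using the Jacobi derivative identities
\[
\tfrac{d}{dx}P_n^{(a,b)}=\tfrac{n+a+b+1}{2}P_{n-1}^{(a+1,b+1)},\qquad
\tfrac{d}{dx}\bigl[(1-x)^a(1+x)^bP_n^{(a,b)}\bigr]=-2(n+1)(1-x)^{a-1}(1+x)^{b-1}P_{n+1}^{(a-1,b-1)},
\]
applied to each factor in turn (Lemma~\ref{lem:aux-parts}). Because
\(
n(n+\alpha+\beta+1)-m(m+\alpha+\beta+1)=(n-m)(n+m+\alpha+\beta+1),
\)
this produces the factor $1/|n-m|$ directly, multiplied by $t$ times integrals of the form $\mathfrak{I}_t^{(\alpha+1,\beta+1,\alpha,\beta,\alpha+1,\beta+1)}(n-1,m)$. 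Szeg\H{o}'s uniform bound $|p_k^{(a,b)}(x)|\le C(1-x)^{-a/2-1/4}(1+x)^{-b/2-1/4}$ then makes the remaining integrand exactly $(nm)^{-1/2}e^{-t(1-x)}$ up to constants; the $w_n^{(\alpha,\beta)}w_m^{(\alpha,\beta)}\sim\sqrt{nm}$ cancels, and the residual $t$-dependence disappears via $t\int_{-1}^{1}e^{-t(1-x)}\,dx=1-e^{-2t}\le 1$. No splitting in $t$, no Stirling, no saddle-point asymptotics are needed.
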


\begin{lem}
\label{lem:heat-bound-diff}
Let $\alpha,\beta\ge-1/2$ and $t \ge 0$. If $n$, $m\in\mathbb{N}$, $m\not=n$ and $m/2\le n \le 3m/2$, then
\begin{equation}
\label{ec:heat-bound-diff}
\sup_{t>0}|K_t^{(\alpha,\beta)}(n+1,m)-K_t^{(\alpha,\beta)}(n,m)|\le \frac{C}{|n-m|^{2}}.
\end{equation}
\end{lem}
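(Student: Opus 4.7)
The natural starting point is to rewrite the difference as a single integral,
\[
K_t^{(\alpha,\beta)}(n+1,m)-K_t^{(\alpha,\beta)}(n,m)=\int_{-1}^{1}e^{-(1-x)t}\bigl[p_{n+1}^{(\alpha,\beta)}(x)-p_n^{(\alpha,\beta)}(x)\bigr]p_m^{(\alpha,\beta)}(x)\,d\mu_{\alpha,\beta}(x),
\]
and to extract two powers of $|n-m|^{-1}$ via successive Sturm--Liouville integrations by parts, paralleling the argument that yields Lemma~\ref{lem:acot-pos} through Lemma~\ref{lem:aux-parts}. The key algebraic input is the eigenvalue equation for $p_k^{(\alpha,\beta)}$ in Sturm--Liouville form,
\[
\bigl[(1-x)^{\alpha+1}(1+x)^{\beta+1}(p_k^{(\alpha,\beta)})'(x)\bigr]'=-\lambda_k(1-x)^{\alpha}(1+x)^{\beta}p_k^{(\alpha,\beta)}(x),\qquad\lambda_k:=k(k+\alpha+\beta+1).
\]

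Multiplying this equation (for $k$) by $p_m^{(\alpha,\beta)}$, subtracting the analogue with the roles of $k$ and $m$ interchanged, integrating against $e^{-(1-x)t}$ on $[-1,1]$, and using that the boundary contributions vanish thanks to the weight $(1-x)^{\alpha+1}(1+x)^{\beta+1}$ (valid for $\alpha,\beta>-1$), one arrives at the Wronskian identity
\[
(\lambda_m-\lambda_k)\,K_t^{(\alpha,\beta)}(k,m)\;=\;t\int_{-1}^{1}e^{-(1-x)t}(1-x)^{\alpha+1}(1+x)^{\beta+1}W_{k,m}(x)\,dx,
\]
with $W_{k,m}=p_k^{(\alpha,\beta)}(p_m^{(\alpha,\beta)})'-p_m^{(\alpha,\beta)}(p_k^{(\alpha,\beta)})'$. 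Applying it for $k=n$ and $k=n+1$, taking the difference and decomposing by partial fractions via $\lambda_{n+1}-\lambda_n=2n+\alpha+\beta+2$, the target splits as
\[
K_t^{(\alpha,\beta)}(n+1,m)-K_t^{(\alpha,\beta)}(n,m)=\frac{t(\lambda_{n+1}-\lambda_n)\,J_n(t)}{(\lambda_m-\lambda_n)(\lambda_m-\lambda_{n+1})}+\frac{t\,[J_{n+1}(t)-J_n(t)]}{\lambda_m-\lambda_{n+1}},
\]
where $J_k(t)$ denotes the integral appearing on the right-hand side of the Wronskian identity. Under the hypothesis $m/2\le n\le 3m/2$ one has $\lambda_m-\lambda_k\asymp|m-k|\,m$ and $\lambda_{n+1}-\lambda_n\asymp m$, so the first summand is of order $|n-m|^{-2}m^{-1}\,tJ_n(t)$, which is acceptable once $tJ_n(t)=O(m)$ is established from pointwise bounds on $p_k^{(\alpha,\beta)}$ and its derivatives on $[-1,1]$ combined with the elementary inequality $t^{j}e^{-(1-x)t}(1-x)^{j}\le C_{j}$.

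The real difficulty, and the main obstacle, is the second summand $t[J_{n+1}(t)-J_n(t)]/(\lambda_m-\lambda_{n+1})$: a naive estimate gives only $|n-m|^{-1}$, so one extra factor $|n-m|^{-1}$ must be squeezed out of $J_{n+1}(t)-J_n(t)$ itself. Here I would apply a second Sturm--Liouville integration by parts, now acting on the difference of Wronskians $W_{n+1,m}-W_{n,m}$, rewritten through the three-term recurrence so that the ``discrete derivative'' $p_{n+1}^{(\alpha,\beta)}-p_n^{(\alpha,\beta)}$ is recast as an eigenfunction-type object; the same eigenvalue identity then produces the missing $|n-m|^{-1}$ factor. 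Keeping this chain of manipulations uniform in $t\ge 0$ is the most delicate point, and it is exactly where the hypothesis $\alpha,\beta\ge-1/2$ enters, to guarantee the pointwise bounds on $p_k^{(\alpha,\beta)}$ and its first two derivatives on the whole of $[-1,1]$ (without endpoint blow-up) that are needed to close the argument.
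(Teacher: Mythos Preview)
Your overall strategy matches the paper's: extract two factors of $|n-m|^{-1}$ by two rounds of Sturm--Liouville integration by parts, and your Wronskian identity is essentially the special case $a=A=\alpha$, $b=B=\beta$ of Lemma~\ref{lem:aux-parts}. The partial-fraction split and the first summand are fine (in fact that summand equals $\tfrac{\lambda_{n+1}-\lambda_n}{\lambda_m-\lambda_{n+1}}\,K_t^{(\alpha,\beta)}(n,m)$, so Lemma~\ref{lem:heat-bound} together with $n\asymp m$ already controls it).

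The gap is in the second summand. You propose to rewrite $p_{n+1}^{(\alpha,\beta)}-p_n^{(\alpha,\beta)}$ ``through the three-term recurrence'' as ``an eigenfunction-type object'' and reapply the eigenvalue identity, but you never name the identity that does this, and the three-term recurrence alone does not: it expresses $p_{n+1}$ via $xp_n$ and $p_{n-1}$, still a combination of eigenfunctions with \emph{distinct} eigenvalues, so running your Wronskian argument on $q=p_{n+1}-p_n$ is a tautology (one simply recovers $t[J_{n+1}-J_n]$). The missing input is the contiguity relation \cite[p.~71, eq.~(4.5.4)]{Szego},
\[
\tfrac{2n+\alpha+\beta+2}{2}\,(1-x)\,P_n^{(\alpha+1,\beta)}(x)=(n+\alpha+1)P_n^{(\alpha,\beta)}(x)-(n+1)P_{n+1}^{(\alpha,\beta)}(x),
\]
which recasts (essentially) the discrete derivative as $(1-x)$ times a \emph{single} Jacobi polynomial with shifted parameter $\alpha\to\alpha+1$. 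The factor $(1-x)$ is absorbed into the weight, and $P_n^{(\alpha+1,\beta)}$ \emph{is} an eigenfunction of the $(\alpha+1,\beta)$ Sturm--Liouville problem, so Lemma~\ref{lem:aux-parts} can be applied twice to the resulting integral $D_t^{(\alpha,\beta)}(n,m)=w_n^{(\alpha+1,\beta)}w_m^{(\alpha,\beta)}\,\mathfrak{I}_t^{(\alpha+1,\beta,\alpha,\beta,\alpha+1,\beta)}(n,m)$ to produce the full $|n-m|^{-2}$. The $O(1/n)$ mismatch between the right-hand side above and $p_n^{(\alpha,\beta)}-p_{n+1}^{(\alpha,\beta)}$ is absorbed by Lemma~\ref{lem:heat-bound} and $n\asymp m\ge|n-m|$. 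One further remark: the hypothesis $\alpha,\beta\ge-1/2$ is not there to prevent endpoint blow-up of $p_k^{(\alpha,\beta)}$---the bound \eqref{eq:unif} \emph{does} blow up at $\pm1$---but to make \eqref{eq:unif} available and to keep the resulting weighted $x$-integrals uniformly bounded in~$t$.
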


%%%%%%%%%%%%%%%%%%%%%%%%%%%%%%%%%%%%%%%%%%%%%%%%%%%%%%%%%%%%%%%
\section{Technical results}
\label{sec:tech}
%%%%%%%%%%%%%%%%%%%%%%%%%%%%%%%%%%%%%%%%%%%%%%%%%%%%%%%%%%%%%%%

We adopt the following notation
\[
\mathfrak{I}_t^{(a,b,A,B,\alpha,\beta)}(n,m)=\int_{-1}^{1}e^{-t(1-x)}P_n^{(a,b)}(x)P_m^{(A,B)}(x)(1-x)^\alpha (1+x)^\beta \, dx,
\]
where $t \ge 0$, $a,b,A,B,\alpha,\beta>-1$, and $n,m\ge 0$.

We start this section giving a technical lemma related to the family of integrals $\mathfrak{I}_t^{(a,b,A,B,\alpha,\beta)}(n,m)$  which we will use to prove Lemma \ref{lem:acot-pos} and the required Calder\'{o}n-Zygmund bounds contained in Lemmas \ref{lem:heat-bound} and \ref{lem:heat-bound-diff}.
\begin{lem}
\label{lem:aux-parts}
Let $n$, $m\in\mathbb{N}$ and $a$, $b$, $A$, $B$, $\alpha$, $\beta>-1$ such that $n+a+b+1\neq 0$, $m+A+B+1\neq 0$, and $n(n+a+b+1)\neq m(m+A+B+1)$.

\begin{enumerate}
\item[a)] If $n$, $m\neq 0$, we have that
\begin{equation*}
\begin{aligned}
\mathfrak{I}_{t}^{(a,b,A,B,\alpha,\beta)}(n,m) &= \frac{(n+a+b+1)(m+A+B+1)}{2(n(n+a+b+1)-m(m+A+B+1))} \\&\times \Biggl( \frac{t}{m+A+B+1}\mathfrak{I}_{t}^{(a+1,b+1A,B,\alpha+1,\beta+1)}(n-1,m) \\&- \frac{\alpha-a}{m+A+B+1}\mathfrak{I}_{t}^{(a+1,b+1,A,B,\alpha,\beta+1)}(n-1,m) \\&+ \frac{\beta-b}{m+A+B+1}\mathfrak{I}_{t}^{(a+1,b+1,A,B,\alpha+1,\beta)}(n-1,m) \\&- \frac{t}{n+a+b+1}\mathfrak{I}_{t}^{(a,b,A+1,B+1,\alpha+1,\beta+1)}(n,m-1) \\&+ \frac{\alpha-A}{n+a+b+1}\mathfrak{I}_{t}^{(a,b,A+1,B+1,\alpha,\beta+1)}(n,m-1) \\&- \frac{\beta-B}{n+a+b+1}\mathfrak{I}_{t}^{(a,b,A+1,B+1,\alpha+1,\beta)}(n,m-1)  \Biggr).
\end{aligned}
\end{equation*}

\item[b)] If $n=0$ and $m\in\mathbb{N}$,
\begin{equation*}
\begin{aligned}
\mathfrak{I}_{t}^{(a,b,A,B,\alpha,\beta)}(0,m) &= \frac{t}{2m} \mathfrak{I}_{t}^{(a,b,A+1,B+1,\alpha+1,\beta+1)}(0,m-1) \\&- \frac{\alpha-A}{2m} \mathfrak{I}_{t}^{(a,b,A+1,B+1,\alpha,\beta+1)}(0,m-1) \\&+ \frac{\beta-B}{2m} \mathfrak{I}_{t}^{(a,b,A+1,B+1,\alpha+1,\beta)}(0,m-1).
\end{aligned}
\end{equation*}

\item[c)] If $n\in\mathbb{N}$ and $m=0$,
\begin{equation*}
\begin{aligned}
\mathfrak{I}_{t}^{(a,b,A,B,\alpha,\beta)}(n,0) &= \frac{t}{2n} \mathfrak{I}_{t}^{(a+1,b+1,A,B,\alpha+1,\beta+1)}(n-1,0) \\&- \frac{\alpha-a}{2n} \mathfrak{I}_{t}^{(a+1,b+1,A,B,\alpha,\beta+1)}(n-1,0) \\&+ \frac{\beta-b}{2n} \mathfrak{I}_{t}^{(a+1,b+1,A,B,\alpha+1,\beta)}(n-1,0).
\end{aligned}
\end{equation*}
\end{enumerate}

\end{lem}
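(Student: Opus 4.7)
The plan is to derive parts (a)--(c) by a single integration by parts combined with Rodrigues' formula. The main tool is the identity
\[
2n\,(1-x)^{a}(1+x)^{b}P_{n}^{(a,b)}(x) = -\frac{d}{dx}\left[(1-x)^{a+1}(1+x)^{b+1}P_{n-1}^{(a+1,b+1)}(x)\right],\quad n\ge 1,
\]
which follows from the Rodrigues formula recalled in the paper by extracting one derivative from $\frac{d^{n}}{dx^{n}}((1-x)^{a+n}(1+x)^{b+n})$ and recognising the remaining $(n-1)$-fold derivative as the Rodrigues expression for $P_{n-1}^{(a+1,b+1)}$. The analogous identity holds in the parameters $(m,A,B)$.

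For part (a), assuming $n,m\ge 1$, I would rewrite the weight as $(1-x)^{\alpha}(1+x)^{\beta} = (1-x)^{a}(1+x)^{b}(1-x)^{\alpha-a}(1+x)^{\beta-b}$, apply the boxed identity inside $2n\mathfrak{I}_{t}^{(a,b,A,B,\alpha,\beta)}(n,m)$, and integrate by parts. The boundary contributions vanish because the remaining factor is $(1-x)^{\alpha+1}(1+x)^{\beta+1}$ with $\alpha,\beta>-1$. Differentiating the four factors $e^{-t(1-x)}$, $(1-x)^{\alpha-a}$, $(1+x)^{\beta-b}$ and $P_{m}^{(A,B)}(x)$ (via $\tfrac{d}{dx}P_{m}^{(A,B)}=\tfrac{m+A+B+1}{2}P_{m-1}^{(A+1,B+1)}$) produces precisely the three $(n-1)$-shifted integrals in the statement, together with one spurious cross term
\[
\frac{m+A+B+1}{2}\,\mathfrak{I}_{t}^{(a+1,b+1,A+1,B+1,\alpha+1,\beta+1)}(n-1,m-1).
\]
The symmetric reduction applied to $P_{m}^{(A,B)}$ gives the analogous expansion of $2m\mathfrak{I}_{t}^{(a,b,A,B,\alpha,\beta)}(n,m)$, with the corresponding $(m-1)$-shifted integrals and the same cross term, now with coefficient $\tfrac{n+a+b+1}{2}$.

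The final algebraic step is to eliminate the cross integral. Multiplying the first identity by $n+a+b+1$ and the second by $m+A+B+1$ makes the cross-term coefficients coincide, so subtracting yields
\[
2\bigl[n(n+a+b+1)-m(m+A+B+1)\bigr]\,\mathfrak{I}_{t}^{(a,b,A,B,\alpha,\beta)}(n,m)
\]
equal to exactly the combination appearing in part (a); the hypothesis $n(n+a+b+1)\ne m(m+A+B+1)$ then licenses division. Parts (b) and (c) do not require this elimination: when $n=0$ (respectively $m=0$) the factor $P_{0}^{(a,b)}\equiv 1$ (respectively $P_{0}^{(A,B)}\equiv 1$) means no cross term is generated, and a single application of the boxed identity to the surviving polynomial followed by the same integration by parts yields the stated three-term formula directly. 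The main obstacle is spotting that the symmetric application to both $P_{n}^{(a,b)}$ and $P_{m}^{(A,B)}$ produces matching cross-term coefficients, which is exactly what forces the denominator $n(n+a+b+1)-m(m+A+B+1)$ to appear in the final expression.
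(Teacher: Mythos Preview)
Your proposal is correct and follows essentially the same route as the paper: both arguments rest on the Rodrigues-type identity you box together with the derivative formula $\tfrac{d}{dx}P_m^{(A,B)}=\tfrac{m+A+B+1}{2}P_{m-1}^{(A+1,B+1)}$, and both perform two integrations by parts that produce the same cross term $\mathfrak{I}_t^{(a+1,b+1,A+1,B+1,\alpha+1,\beta+1)}(n-1,m-1)$ which must then be eliminated. The only difference is organizational: the paper carries out the second integration by parts on the cross term itself (recovering a multiple of $\mathfrak{I}_t^{(a,b,A,B,\alpha,\beta)}(n,m)$ and then solving), whereas you expand $\mathfrak{I}_t^{(a,b,A,B,\alpha,\beta)}(n,m)$ symmetrically via $P_n^{(a,b)}$ and via $P_m^{(A,B)}$ and take the linear combination that cancels the cross term---the two procedures yield the same pair of linear relations and hence the same formula.
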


\begin{proof}
First, we prove case a). By using the identities (see \cite[p. 63, eq.~(4.21.7)]{Szego})
\[
\frac{d}{dx}P^{(\alpha,\beta)}_{n}(x)=\frac{1}{2}(n+\alpha+\beta+1)P^{(\alpha+1,\beta+1)}_{n-1}(x)
\]
and (see~\cite[p. 94, eq.~(4.10.1)]{Szego} or \cite[p.~446, eq.~(18.9.16)]{NIST})
\[
\frac{d}{dx}\left((1-x)^{\alpha}(1+x)^{\beta}P^{(\alpha,\beta)}_{n}(x)\right)=-2(n+1)(1-x)^{\alpha-1}(1+x)^{\beta-1}P^{(\alpha-1,\beta-1)}_{n+1}(x),
\]
and applying integration by parts we have
\begin{multline*}
\mathfrak{I}_t^{(a,b,A,B,\alpha,\beta)}(n,m)\\
\begin{aligned}
&=\frac{-1}{2n}\int_{-1}^{1}e^{-t(1-x)}\frac{d}{dx}\left(P_{n-1}^{(a+1,b+1)}(x)(1-x)^{a+1}(1+x)^{b+1}\right)P_m^{(A,B)}(x)\\
&\kern10pt\times(1-x)^{\alpha-a} (1+x)^{\beta-b} \, dx\\&=\frac{t}{2n}\mathfrak{I}_t^{(a+1,b+1,A,B,\alpha+1,\beta+1)}(n-1,m)\\
&\kern10pt+\frac{m+A+B+1}{4n}\mathfrak{I}_t^{(a+1,b+1,A+1,B+1,\alpha+1,\beta+1)}(n-1,m-1)\\
&\kern10pt-\frac{\alpha-a}{2n}\mathfrak{I}_t^{(a+1,b+1,A,B,\alpha,\beta+1)}(n-1,m)\\
&\kern10pt+\frac{\beta-b}{2n}\mathfrak{I}_t^{(a+1,b+1,A,B,\alpha+1,\beta)}(n-1,m).
\end{aligned}
\end{multline*}
In a similar way, we obtain that
\begin{multline*}
\mathfrak{I}_t^{(a+1,b+1,A+1,B+1,\alpha+1,\beta+1)}(n-1,m-1)\\
\begin{aligned}
&=\frac{2}{n+a+b+1} \int_{-1}^{1}e^{-t(1-x)}\frac{d}{dx}\Big(P_{n}^{(a,b)}(x)\Big)\\
&\kern10pt \times\Big( P_{m-1}^{(A+1,B+1)}(x)(1-x)^{A+1}(1+x)^{B+1}\Big) (1-x)^{\alpha-A} (1+x)^{\beta-B} \, dx\\
&= \frac{-2t}{n+a+b+1}\mathfrak{I}_t^{(a,b,A+1,B+1,\alpha+1,\beta+1)}(n,m-1)\\
&\kern10pt +\frac{4m}{n+a+b+1}\mathfrak{I}_t^{(a,b,A,B,\alpha,\beta)}(n,m)\\
&\kern10pt +\frac{2(\alpha-A)}{n+a+b+1}\mathfrak{I}_t^{(a,b,A+1,B+1,\alpha,\beta+1)}(n,m-1)\\
&\kern10pt -\frac{2(\beta-B)}{n+a+b+1}\mathfrak{I}_t^{(a,b,A+1,B+1,\alpha+1,\beta)}(n,m-1)
\end{aligned}
\end{multline*}
and the result follows.

For cases b) and c) write
\begin{multline*}
\mathfrak{I}_{t}^{(a,b,A,B,\alpha,\beta)}(0,m)\\
\begin{aligned}
&= -\frac{1}{2m} \int_{-1}^{1} e^{-t(1-x)}\frac{d}{dx}\left((1-x)^{A+1}(1+x)^{B+1}P_{m-1}^{(A+1,B+1)}(x)\right)\\
&\kern10pt\times(1-x)^{\alpha-A}(1+x)^{\beta-B}\,dx
\end{aligned}
\end{multline*}
in the former and
\begin{multline*}
\mathfrak{I}_{t}^{(a,b,A,B,\alpha,\beta)}(n,0)\\
\begin{aligned}
&= -\frac{1}{2n} \int_{-1}^{1} e^{-t(1-x)}\frac{d}{dx}\left((1-x)^{a+1}(1+x)^{b+1}P_{n-1}^{(a+1,b+1)}(x)\right)\\
&\kern10pt\times(1-x)^{\alpha-a}(1+x)^{\beta-b}\,dx
\end{aligned}
\end{multline*}
in the latter and integrate by parts.
\end{proof}

%%%%%%%%%%%%%%%%%%%%%%%%%%%%%%%%%%%%%%%%%
\begin{proof}[Proof of Lemma \ref{lem:acot-pos}]
%%%%%%%%%%%%%%%%%%%%%%%%%%%%%%%%%%%%%%%%%
We only check the cases $n,m\ge 2$, with $n\not=m$, by using a) in Lemma \ref{lem:aux-parts}. The remaining cases can be obtained from b) and c) in the same lemma. Taking $a=A=\alpha$ and $b=B=\beta$ in Lemma \ref{lem:aux-parts} case a) and noting that
\[
n(n+\alpha+\beta+1)-m(m+\alpha+\beta+1)=(n-m)(n+m+\alpha+\beta+1)
\]
we have
\begin{multline}
\label{eq:heat-1}
|K_t^{(\alpha,\beta)}(n,m)|\le \frac{C t}{|n-m|}w_{n}^{(\alpha,\beta)}w_{m}^{(\alpha,\beta)} \Bigg(\left|\mathfrak{I}_t^{(\alpha+1,\beta+1,\alpha,\beta,\alpha+1,\beta+1)}(n-1,m)\right|
\\+\left|\mathfrak{I}_t^{(\alpha,\beta,\alpha+1,\beta+1,\alpha+1,\beta+1)}(n,m-1)\right|\Bigg).
\end{multline}
Lemma \ref{lem:aux-parts} gives that
\begin{align*}
\left|\mathfrak{I}_t^{(\alpha+1,\beta+1,\alpha,\beta,\alpha+1,\beta+1)}(n-1,m)\right|&\le \frac{C}{|n-m|}\Bigg(t|\mathfrak{I}^{(\alpha+2,\beta+2,\alpha,\beta,\alpha+2,\beta+2)}(n-2,m)|\\
&\kern10 pt +t|\mathfrak{I}^{(\alpha+1,\beta+1,\alpha+1,\beta+1,\alpha+2,\beta+2)}(n-1,m-1)|
\\&\kern10 pt + |\mathfrak{I}^{(\alpha+1,\beta+1,\alpha+1,\beta+1,\alpha+1,\beta+2)}(n-1,m-1)|
\\&\kern10 pt +|\mathfrak{I}^{(\alpha+1,\beta+1,\alpha+1,\beta+1,\alpha+2,\beta+1)}(n-1,m-1)|
\Bigg)
\end{align*}
Now, with the uniform bound
\begin{equation}
\label{eq:unif}
|p_n^{(a,b)}(x)|\le C (1-x)^{-a/2-1/4}(1+x)^{-b/2-1/4},\qquad a,b\geq-\frac{1}{2},
\end{equation}
which arises from \cite[eq. 7.32.6]{Szego}, taking into account the asymptotic behaviour
\begin{equation}
\label{eq:asym-wn}
w_n^{(a,b)}\sim n^{1/2}, \qquad a,b \ge -\frac{1}{2},
\end{equation}
and the bound
\begin{align*}
\int_{-1}^{0}e^{-t(1-x)}(1-x)^{-1/2}\,dx&=\int_{0}^{1}e^{-t(1+x)}(1+x)^{-1/2}\,dx
\\&\le \int_{0}^{1}e^{-t(1-x)}(1-x)^{-1/2}\,dx,
\end{align*}
we obtain that
\begin{multline*}
\left|\mathfrak{I}_t^{(\alpha+1,\beta+1,\alpha,\beta,\alpha+1,\beta+1)}(n-1,m)\right|\\
\begin{aligned}
&\le \frac{C}{\sqrt{nm}|n-m|}\Bigg(t\int_{-1}^{1}e^{-t(1-x)}(1-x)^{1/2}\,dx+\int_{0}^{1}e^{-t(1-x)}(1-x)^{-1/2}\,dx\Bigg)\\
&\le \frac{C}{\sqrt{nm}|n-m|}\Bigg(t^{1/2}\int_{-1}^{1}e^{-t(1-x)/2}\,dx+t^{-1/2}\int_{0}^{\infty}e^{-s}s^{-1/2}\,ds\Bigg)\\
&\le \frac{Ct^{-1/2}}{\sqrt{nm}|n-m|}.
\end{aligned}
\end{multline*}
Following the same procedure, we deduce that
\[
\left|\mathfrak{I}_t^{(\alpha,\beta,\alpha+1,\beta+1,\alpha+1,\beta+1)}(n,m-1)\right|\le \frac{Ct^{-1/2}}{\sqrt{nm}|n-m|}.
\]
Then, from \eqref{eq:heat-1}, the estimate \eqref{eq:l-infty} follows.
%%%%%%%%%%%%%%%%%%%%%%%%%%%%%%%%%%%%%%%%%
\end{proof}
%%%%%%%%%%%%%%%%%%%%%%%%%%%%%%%%%%%%%%%%%%

%%%%%%%%%%%%%%%%%%%%%%%%%%%%%%%%%%%%%%%%%
\begin{proof}[Proof of Lemma~\ref{lem:heat-bound}]
%%%%%%%%%%%%%%%%%%%%%%%%%%%%%%%%%%%%%%%%%

For the cases $n,m\ge 1$, with $n\not=m$, the result follows from \eqref{eq:heat-1}, \eqref{eq:unif}, and \eqref{eq:asym-wn}. The estimate \eqref{ec:heat-bound} for the remaining cases is a consequence of b) and c) in Lemma \ref{lem:aux-parts}.

%%%%%%%%%%%%%%%%%%%%%%%%%%%%%%%%%%%%%%%%%%%
\end{proof}
%%%%%%%%%%%%%%%%%%%%%%%%%%%%%%%%%%%%%%%%%%%

%%%%%%%%%%%%%%%%%%%%%%%%%%%%%%%%%%%%%%%%%
\begin{proof}[Proof of Lemma~\ref{lem:heat-bound-diff}]
%%%%%%%%%%%%%%%%%%%%%%%%%%%%%%%%%%%%%%%%%

Note that the only possibility for $m$ satisfying the assumptions when $n=1$ is $m=2$. So let us suppose firstly that $n$ is not equal to 1 and $m$ not equal to 2 simultaneously.

We begin using the relation (see \cite[p. 71, eq.~(4.5.4)]{Szego})
\[
\frac{2n+\alpha+\beta+2}{2}(1-x)P_n^{(\alpha+1,\beta)}(x)=(n+\alpha+1)P_n^{(\alpha,\beta)}(x)-(n+1)P_{n+1}^{(\alpha,\beta)}(x),
\]
to get
\begin{align*}
p_n^{(\alpha,\beta)}(x)-p_{n+1}^{(\alpha,\beta)}(x)
&=\left(1-\frac{w_{n+1}^{(\alpha,\beta)}}{w_n^{(\alpha,\beta)}}\right)p_n^{(\alpha,\beta)}(x)
+w_{n+1}^{(\alpha,\beta)}\left(\frac{p_n^{(\alpha,\beta)}(x)}{w_n^{(\alpha,\beta)}}
-\frac{p_{n+1}^{(\alpha,\beta)}(x)}{w_{n+1}^{(\alpha,\beta)}}\right)\\&=\left(1-\frac{w_{n+1}^{(\alpha,\beta)}}{w_n^{(\alpha,\beta)}}\right)p_n^{(\alpha,\beta)}(x)
-\frac{\alpha}{n+1}\frac{w_{n+1}^{(\alpha,\beta)}}{w_n^{(\alpha,\beta)}}p_{n}^{(\alpha,\beta)}(x)
\\&\kern25pt
+\frac{2n+\alpha+\beta+2}{2(n+1)}\frac{w_{n+1}^{(\alpha,\beta)}}{w_n^{(\alpha+1,\beta)}}(1-x)p_{n}^{(\alpha+1,\beta)}(x).
\end{align*}
Therefore,
\begin{multline*}
K_t^{(\alpha,\beta)}(n,m)-K_t^{(\alpha,\beta)}(n+1,m)\\
\begin{aligned}
&=\left(1-\frac{w_{n+1}^{(\alpha,\beta)}}{w_n^{(\alpha,\beta)}}\right)K_t^{(\alpha,\beta)}(n,m)-
\frac{\alpha}{n+1}\frac{w_{n+1}^{(\alpha,\beta)}}{w_n^{(\alpha,\beta)}}K_t^{(\alpha,\beta)}(n,m)\\
&\kern25pt +\frac{2n+\alpha+\beta+2}{2(n+1)}\frac{w_{n+1}^{(\alpha,\beta)}}{w_n^{(\alpha+1,\beta)}}D^{(\alpha,\beta)}_t(n,m),
\end{aligned}
\end{multline*}
with
\[
D_t^{(\alpha,\beta)}(n,m)=w_{n}^{(\alpha+1,\beta)}w_m^{(\alpha,\beta)}\mathfrak{I}_t^{(\alpha+1,\beta,\alpha,\beta,\alpha+1,\beta)}(n,m).
\]
Now, the limit
$$
\lim_{n\to\infty}n\left(\frac{w_{n+1}^{(\alpha,\beta)}}{w_{n}^{(\alpha,\beta)}}-1\right)=\frac{1}{2},
$$
which is a consequence of \eqref{eq:asym-wn}, gives us the estimate
\[
\left|1-\frac{w_{n+1}^{(\alpha,\beta)}}{w_{n}^{(\alpha,\beta)}}\right|\leq\frac{C}{n}.
\]
Last inequality is used together with Lemma~\ref{lem:heat-bound} to obtain
\begin{equation*}
\sup_{t>0} \left|K_t^{(\alpha,\beta)}(n,m)-K_t^{(\alpha,\beta)}(n+1,m)\right| \leq \frac{C}{|n-m|^{2}} + C \sup_{t>0} \left|D_{t}^{(\alpha,\beta)}(n,m)\right|.
\end{equation*}
So the study reduces to prove that
\begin{equation}\label{eq:Dt}
\sup_{t>0} \left|D_{t}^{(\alpha,\beta)}(n,m)\right| \leq \frac{C}{|n-m|^{2}}.
\end{equation}

First, if we apply Lemma \ref{lem:aux-parts}, we obtain that
\begin{multline*}
\left|\mathfrak{I}_t^{(\alpha+1,\beta,\alpha,\beta,\alpha+1,\beta)}(n,m)\right|\le \frac{C}{|n-m|}\Bigg(t\left|\mathfrak{I}_t^{(\alpha+2,\beta+1,\alpha,\beta,\alpha+2,\beta+1)}(n-1,m)\right|\\
\begin{aligned}
&+t\left|\mathfrak{I}_t^{(\alpha+1,\beta,\alpha+1,\beta+1,\alpha+2,\beta+1)}(n,m-1)\right|\\
&+\left|\mathfrak{I}_t^{(\alpha+1,\beta,\alpha+1,\beta+1,\alpha+1,\beta+1)}(n,m-1)\right|\Bigg)
\end{aligned}
\end{multline*}
Now, applying again Lemma \ref{lem:aux-parts} to each term on the right-hand side of the previous inequality we have
\begin{multline*}
t\left|\mathfrak{I}_t^{(\alpha+2,\beta+1,\alpha,\beta,\alpha+2,\beta+1)}(n-1,m)\right|\le \frac{C}{|n-m|}\\
\begin{aligned}
&\times \Bigg(t^2\left|\mathfrak{I}_t^{(\alpha+3,\beta+2,\alpha,\beta,\alpha+3,\beta+2)}(n-2,m)\right|\\
&+t^2\left|\mathfrak{I}_t^{(\alpha+2,\beta+1,\alpha+1,\beta+1,\alpha+3,\beta+2)}(n-1,m-1)\right|\\
&+t\left|\mathfrak{I}_t^{(\alpha+2,\beta+1,\alpha+1,\beta+1,\alpha+2,\beta+2)}(n-1,m-1)\right|\\
&+t\left|\mathfrak{I}_t^{(\alpha+2,\beta+1,\alpha+1,\beta+1,\alpha+3,\beta+1)}(n-1,m-1)\right|\Bigg),
\end{aligned}
\end{multline*}
\begin{multline*}
t\left|\mathfrak{I}_t^{(\alpha+1,\beta,\alpha+1,\beta+1,\alpha+2,\beta+1)}(n,m-1)\right|\le \frac{C}{|n-m|}\\
\begin{aligned}
&\times \Bigg(t^2\left|\mathfrak{I}_t^{(\alpha+2,\beta+1,\alpha+1,\beta+1,\alpha+3,\beta+2)}(n-1,m-1)\right|\\
&\kern 10pt +t^2\left|\mathfrak{I}_t^{(\alpha+1,\beta,\alpha+2,\beta+2,\alpha+3,\beta+2)}(n,m-2)\right|\\
&\kern 10pt +t\left|\mathfrak{I}_t^{(\alpha+2,\beta+1,\alpha+1,\beta+1,\alpha+2,\beta+2)}(n-1,m-1)\right|
\\ &\kern 10pt
+t\left|\mathfrak{I}_t^{(\alpha+1,\beta,\alpha+2,\beta+2,\alpha+2,\beta+2)}(n,m-2)\right|\\
&\kern 10pt
+t\left|\mathfrak{I}_t^{(\alpha+2,\beta+1,\alpha+1,\beta+1,\alpha+3,\beta+1)}(n-1,m-1)\right|\Bigg),
\end{aligned}
\end{multline*}
and
\begin{multline*}
\left|\mathfrak{I}_t^{(\alpha+1,\beta,\alpha+1,\beta+1,\alpha+1,\beta+1)}(n,m-1)\right|\le \frac{C}{|n-m|}\\
\begin{aligned}
&\times \Bigg(t\left|\mathfrak{I}_t^{(\alpha+2,\beta+1,\alpha+1,\beta+1,\alpha+2,\beta+2)}(n-1,m-1)\right|\\
& \kern10pt +t\left|\mathfrak{I}_t^{(\alpha+1,\beta,\alpha+2,\beta+2,\alpha+2,\beta+2)}(n,m-2)\right|\\
& \kern10pt +\left|\mathfrak{I}_t^{(\alpha+2,\beta+1,\alpha+1,\beta+1,\alpha+2,\beta+1)}(n-1,m-1)\right|\Bigg).
\end{aligned}
\end{multline*}
Finally, by using the bound \eqref{eq:unif} and \eqref{eq:asym-wn} we conclude that
\begin{multline*}
\left|D^{(\alpha,\beta)}_t(n,m)\right|\le C |n-m|^{-2}\\
\begin{aligned}
&\times\Bigg( t^2\int_{-1}^{1}e^{-t(1-x)}(1-x)(1+x)^{1/2}\, dx+
t\int_{-1}^{1}e^{-t(1-x)}(1+x)^{1/2}\, dx\\
&\kern12pt+t\int_{-1}^{1}e^{-t(1-x)}(1-x)(1+x)^{-1/2}\, dx+\int_{-1}^{1}e^{-t(1-x)}(1+x)^{-1/2}\, dx\Bigg)\\
&\le C |n-m|^{-2}
\end{aligned}
\end{multline*}
and the proof of \eqref{eq:Dt} is completed.

For the singular case $n=1$ and $m=2$ we follow a similar argument as above but using Lemma~\ref{lem:aux-parts} cases a) and b) to get the result.
\end{proof}

\paragraph{\textbf{Acknowledgement.}} The authors would like to thank the anonymous reviewers for their helpful and constructive comments that greatly contributed to improving the final version of the paper.

%%%%%%%%%%%%%%%%%%%%%%%%%%%%%%%%%%%%%%%%%%%%%%%%%%%%%%%%%%%%%%%

%%%%%%%%%%%%%%%%%%%%%%%%%%%%%%%%%%%%%%%%%%%%%%%%%%%%%%%%%%%%%%%%%%%%
\end{document}